\newtheorem{theorem}{Theorem}[section]
\newtheorem{corollary}[theorem]{Corollary}
\newtheorem{lemma}[theorem]{Lemma}
\newtheorem{proposition}[theorem]{Proposition}
\newtheorem{question}[theorem]{Question}
\theoremstyle{definition}
\newtheorem{notation}[theorem]{Notation}
\newtheorem*{theorem*}{Theorem}
\theoremstyle{definition}
\newtheorem{definition}[theorem]{Definition} 
\newtheorem{example}[theorem]{Example}
\theoremstyle{plain} 
\newtheorem*{claim}{Claim} 
\xpatchcmd{\proof}{\itshape}{\scshape}{}
\title[]{Hanf Numbers for Poset Games}
\author[]{Fabián Rivero Herrera}
\email{Fabian.Rivero.Herrera@vutbr.cz}
\address{Institute of Mathematics, Faculty of Mechanical Engineering, Brno University of
Technology, Brno, Czech Republic}
\begin{document}
\begin{abstract}
Given two well partial orders $(P;\leq_P)$ and $(T;\leq_T)$, each with a minimum element, we study the following question: which player has a winning strategy for Chomp on the poset $(P\times T;\leq_{P\times T})$? Here, $(P\times T;\leq_{P\times T})$ denotes the poset obtained as the Cartesian product of $P$ and $T$, equipped with the corresponding lexicographic order. The answer to this very natural question depends strongly on the specific choice of $(P;\leq_P)$ and $(T;\leq_T)$. For this reason, we restrict our attention to classes of posets given by powers of a fixed poset: $\{(P^\sigma;\leq_{P^\sigma})\mid \sigma\in\mathrm{Ord}\}$. A fundamental fact about these classes of structures is that, if the second player does not have a winning strategy for all the posets in $\{(P^\sigma;\leq_{P^\sigma})\mid \sigma\in\mathrm{Ord}\}$, there exists an ordinal $\xi$ such that the second player has a winning strategy on $(P^{\xi};\leq_{P^{\xi}})$ but not on $(P^{\gamma};\leq_{P^{\gamma}})$ for all $\gamma\geq\xi+1$. Determining the corresponding ordinal for this Hanf number-style property constitutes the main objective of this work.

Inspired by results of Garc\'ia-Marco and Knauer, we focus on classes of posets with a purely algebraic definition. These posets arise from submonoids (with respect to the natural sum, or Hessenberg sum) of ordinals of the form $\omega^\sigma$ and are generated by sets of ordinals. In the process, we provide a test to determine whether a finite set $\Gamma$ of ordinals indeed yields well partial orders, and, using set-theoretic techniques, we establish an upper bound for the ordinal $\xi$: if $\Gamma\subset\omega$, then $\xi<\omega_1$, and otherwise $\xi<|\bigcup\Gamma|^+$.
\end{abstract}
\maketitle
\vspace{-1cm}
\section{Introduction}
In a poset game, two players take turns choosing elements from a partially ordered set and, on each turn, removing the chosen element and all those above it. The first player to run out of elements to choose loses. A particular case (and one of the most popular poset games) is the game \textit{Chomp}: the poset is a rectangle of dimension $n\times m$ where each element is smaller than all the elements above it and to its right; the first player to eliminate the minimum (the element at the bottom left) loses. We can also define this game on any poset with a global minimum and, of course, by removing it, we obtain a poset game with the rules we explained at the beginning. The history of this combinatorial game dates back to 1952, when the Dutch mathematician Frederik Schuh introduced the ``game of divisors’’. Quite some time later, David Gale described in \cite{gale} the Chomp game in the way just mentioned.

A natural question one may ask is the following: given two posets $(P;\leq_P) $ and $(T;\leq_T)$, each with a global minimum, what can we say about the Chomp game on the poset $(P\times T;\leq_{P\times T})$? Here, $P\times T$ denotes the set of pairs $(p,q)$ with $p\in P$ and $q\in T$, and $\leq_{P\times T}$ is the lexicographic order; that is, $(p,q)\leq_{P\times T}(p',q')$ iff $p\leq_P p'$ or $p=p'$ and $q\leq_T q'$. More specifically, a similar question can be asked for successive powers $(P^{n+1};\leq_{P^{n+1}}):=(P\times P^{n};\leq_{P\times P^{n}})$ of $(P;\leq_P)$, with $n\geq 1$.

We will see that the answer to these questions depends heavily on the posets $(P;\leq_P)$ and $(T;\leq_T)$ that we choose. We will focus on addressing the second question and, in particular, we will work with a generalization of the Chomp game that was introduced in \cite{ALCO_2018__1_3_371_0}. There, the Chomp game is studied on the posets given by numerical semigroups $ \mathcal{S}\subset\mathbb{N}$ with the order defined by $a\leq_{\mathcal{S}}b$ iff $b-a\in\mathcal{S}$. This is a huge generalization of the original game, so that, as in the classic Chomp, there is no known constructive way to find winning strategies, except in particular cases.

Specifically, we propose a generalization of the game studied in \cite{ALCO_2018__1_3_371_0} to a set-theoretic context: as in $\mathbb{N}$, any additively indecomposable ordinal has a monoid structure on which we can define a partial order and, thus, play the Chomp game. In particular, we will focus on submonoids with natural sum (also called Hessenberg sum). The following sections will make clear that this family of submonoids is very interesting for our purposes. For example, we prove that it is possible to apply results from \cite{ALCO_2018__1_3_371_0} to this context to obtain constructive winning strategies. Apart from that, we will see that the algebraic definition of these posets allows us to use purely set-theoretic techniques to draw some conclusions about the game.

Since this paper may attract readers interested in game theory or set theory, the following section presents some very basic facts from both disciplines that will be necessary later. In the third section, we present some basic results about Chomp on $(P\times T;\leq_{P\times T})$ and study some order-theoretic properties of the well partial order generators; that is, sets of ordinals that generate monoids in which we can define a natural well partial order structure. For instance, we provide a very general test to determine whether a finite set of ordinals is a well partial order generator. In the final section, we will investigate classes of well partial orders $\{ (\mathcal{S}^\tau;\leq_{\mathcal{S}^\tau}) \mid\tau\geq\sigma\}$ generated by sets of ordinals $\Gamma$, where the sets $\mathcal{S}^\tau\subseteq\omega^\tau$ are monoids under natural sum. In particular, if the second player does not have a winning strategy on all the posets of the corresponding class, then there exists a successor ordinal $\xi+1$, which we denote by $\text{ch}(\Gamma)$, such that the second player has a winning strategy on $(\mathcal{S}^\alpha;\leq_{\mathcal{S}^\alpha})$, for $\alpha\leq\xi$, but not on $(\mathcal{S}^{\gamma};\leq_{\mathcal{S}^{\gamma}})$ for all $\gamma\geq\xi+1$. Our main result states that, if $\Gamma$ is finite, $\text{ch}(\Gamma)<\omega_1$ if $\Gamma\subset\omega$ and $\textnormal{ch}(\Gamma)<|\bigcup\Gamma|^+$ otherwise.
\section{Preliminaries}
\subsection{Game theory} The game-theoretic concepts required in this work are fairly simple; however, for a reader not well acquainted with these topics, we shall briefly mention the tools that will be needed in what follows. From now on, we will refer to the first player as player $A$ and the second player as player $B$.
\begin{theorem}[Zermelo's theorem]
    In any finite and impartial two-player game there exists a winning strategy for either $A$ or $B$.
\end{theorem}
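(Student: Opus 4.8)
The plan is to argue by backward induction on the game tree. I would model the game as a finite rooted tree whose nodes are the reachable positions and whose edges are the legal moves; the root is the initial position, and the leaves are the terminal positions in which the player to move has no legal move and therefore loses. Because the game is finite, this tree has no infinite branch and only finitely many nodes, so I may induct on the height of the subtree rooted at each node (equivalently, on the length of the longest play remaining from that position).

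The key step is to classify every position as a $P$-position (a loss for the player about to move) or an $N$-position (a win for the player about to move), by the usual rules: a terminal position is a $P$-position, since the mover has no move and loses; a non-terminal position is an $N$-position if at least one of its children is a $P$-position, and a $P$-position if every child is an $N$-position. Impartiality is exactly what makes this relative classification well posed, since the set of available moves depends only on the position and not on which player is to move, and well-foundedness guarantees that the recursion terminates, assigning exactly one label to each node.

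From the label of the root the conclusion is immediate. If the root is an $N$-position, then player $A$ has a winning strategy, namely always move to a child that is a $P$-position (one exists by the definition of an $N$-position); if the root is a $P$-position, then every $A$-move leads to an $N$-position, and the symmetric prescription gives $B$ a winning strategy. A short secondary induction verifies that these prescriptions genuinely force a win: following the strategy, after each of the strategy-holder's moves the opponent faces a $P$-position, which is either terminal, so the opponent has already lost, or else returns the strategy-holder to an $N$-position from which a move to a $P$-position is again available. Since every move strictly decreases the rank, the opponent eventually confronts a terminal $P$-position and loses.

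The argument is essentially routine, so there is no deep obstacle; the only points demanding care are the well-definedness of the recursive labeling, which relies on the absence of infinite plays supplied here by finiteness, and the verification that the extracted positional strategy is genuinely winning rather than merely non-losing. I would also remark that impartiality is what lets me work with a single $P/N$ labeling, but the existence statement itself holds for any finite two-player game of perfect information without draws; and that, replacing ordinary induction by transfinite induction on an ordinal-valued rank of the game tree, the result extends verbatim to well-founded games, which is the setting relevant to the poset games studied in the sequel.
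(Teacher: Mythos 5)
The paper states Zermelo's theorem as standard background and gives no proof of it, so there is no in-paper argument to compare against; your proof is the classical one (backward induction with the $P$/$N$-position labeling) and it is correct. One point deserves more care than you give it in the main body: you claim that finiteness of the game implies the game tree has ``only finitely many nodes,'' but this does not follow when a position admits infinitely many legal moves --- and that is precisely the situation in this paper, where Chomp is played on infinite well partial orders such as $(\mathcal{S}^\sigma;\leq_{\mathcal{S}^\sigma})$ for infinite $\sigma$. There the tree is infinitely branching, every branch is finite, yet the tree is infinite and the ``length of the longest play remaining'' need not exist; the induction must instead be on the ordinal-valued well-founded rank of the tree. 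Your closing remark supplies exactly this repair (transfinite induction on rank for well-founded games), so the argument as a whole is sound, but for the intended application that remark is not an optional generalization --- it is the version of the theorem the paper actually uses, and it would be better placed in the body of the proof than in a concluding aside.
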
 Here, a finite game is a game in which any play of the game ends after a finite number of moves. In the case of games on partially ordered sets, we can ensure that the game is finite by requiring that there are no infinite antichains or infinite descending chains; that is, we shall work with \textit{well partial orders}.

\begin{proposition}
    If $ P $ is a well partial order with a global minimum and a global maximum (different from each other), player $A$ has a winning strategy for Chomp on $P$.
\end{proposition}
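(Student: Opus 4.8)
The plan is to prove this by a \emph{strategy-stealing} argument, which establishes that $A$ wins without ever exhibiting an explicit strategy. First I would record that the game is finite, so that Zermelo's theorem applies. Indeed, since $P$ is a well partial order it has no infinite bad sequence, and the elements successively removed in any play form such a bad sequence: a newly chosen element survived the removal of the up-sets of all previously chosen ones, hence lies above none of them, so for any two chosen elements $x_j, x_k$ with $j<k$ we have $x_j \not\leq x_k$. Therefore every play terminates after finitely many moves, and by Zermelo's theorem one of $A$, $B$ has a winning strategy.

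Assume, toward a contradiction, that $A$ does not; then $B$ has a winning strategy $\tau$. The key idea is to let $A$ open with the ``wasted'' move of removing the global maximum $M$. Because nothing lies strictly above $M$, this deletes only $M$ and leaves the position $P\setminus\{M\}$ with $B$ to move. Since $\tau$ wins for $B$ against \emph{every} opening of $A$, it must prescribe from $P\setminus\{M\}$ a move, say removing $x$ together with its up-set $U(x)=\{y\in P : y\geq x\}$, that lands in a position losing for the player now to move, namely $A$.

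I would then exploit the identity that this position is one $A$ could have produced on the first move. Since $x\neq M$ and $M\geq x$, we have $M\in U(x)$, so $(P\setminus\{M\})\setminus U(x)=P\setminus U(x)$, which is exactly the position reached if $A$ opens by removing $x$. Because Chomp is impartial, whether a position is a loss for the player to move depends only on the position, not on whose turn it is. Hence by opening with $x$, player $A$ reaches, with $B$ to move, a position already known to be losing for the mover; so $B$ loses, and opening with $x$ is a winning first move for $A$. This contradicts the assumption that $B$ has a winning strategy, so $A$ must have one.

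I expect the only delicate points to be bookkeeping rather than substance: verifying that $\tau$ genuinely supplies a reply from $P\setminus\{M\}$ landing in a loss-for-the-mover position (it must, since a winning strategy for $B$ wins against every first move of $A$), keeping straight whose turn it is at each stage, and checking the degenerate cases — for instance $P=\{m,M\}$ with minimum $m$, or a forced reply in which $B$ can only take the minimum and immediately loses — in which $A$'s winning move is already visible directly. The conceptual heart is simply that removing the maximum costs nothing, letting $A$ impersonate $B$'s strategy one tempo ahead.
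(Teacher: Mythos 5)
Your proof is correct and follows exactly the strategy-stealing argument the paper itself gives (the paper only sketches it, citing Fenner--Rogers for details): remove the global maximum first, then impersonate $B$'s purported winning strategy, using that the maximum lies in the up-set of any other element so the resulting positions coincide. Your additional care about finiteness via the absence of infinite bad sequences and about the degenerate replies is sound bookkeeping on top of the same idea.
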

The proof of the preceding result uses what is known as \emph{strategy stealing}: the idea is that, if player \textit{B} had a winning strategy, then player \textit{A} could remove the maximum element in the first move, and thereafter any response by player \textit{B} is something player \textit{A} could have played as well. More details can be found in \cite{intro}. The following is also a fundamental argument in poset games:
\begin{proposition}\label{propsim}
    Let $ (T; \leq_T) $ be a well partial order. Then, player $B$ has a winning strategy for Chomp on the poset $  T^2 = (\{0,1\} \times T) \cup \{0\} $ with $0\leq_{T^2} (a,b)$ for all $(a,b)\in\{0,1\} \times T $ and $ (a,b) \leq_{T^2} (c,d) $ iff $  a = c $ and $ b \leq_T d $.
\end{proposition}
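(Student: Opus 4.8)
The plan is to exhibit an explicit winning strategy for player $B$ based on the evident symmetry of the poset. Observe first that $T^2$ consists of a global minimum $0$ together with two order-isomorphic copies of $T$, namely $\{0\}\times T$ and $\{1\}\times T$, which are mutually incomparable: by the definition of $\leq_{T^2}$, an element $(c,d)$ satisfies $(i,b)\leq_{T^2}(c,d)$ only when $c=i$ and $b\leq_T d$. Hence the set of elements lying above $(i,b)$ is contained entirely within the copy $\{i\}\times T$, and in particular a move that removes $(i,b)$ together with everything above it neither touches the other copy $\{1-i\}\times T$ nor removes the minimum $0$. This structural observation is what makes a pairing (mirroring) strategy possible.

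First I would define $B$'s strategy: whenever $A$ plays an element $(i,b)$ inside one of the two copies, $B$ responds by playing the twin element $(1-i,b)$ in the opposite copy. I would then set up the invariant that, immediately after each of $B$'s moves, the two copies $\{0\}\times T$ and $\{1\}\times T$ are in identical states (the same subset of $T$ has been deleted from each) and $0$ is still present. Initially this holds. The verification that the invariant is preserved is the heart of the argument: since before $A$'s move the two copies agree and the move $(i,b)$ was legal, the twin $(1-i,b)$ is still present in the other copy, so $B$'s reply is legal; and because removing the up-set of an element stays within its own copy, after $B$'s reply the same elements have been deleted from both copies. Thus the invariant is restored, and $B$ never has to play $0$.

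It remains to argue that this strategy actually wins, for which I would invoke finiteness. Because $T$ is a well partial order, so is $T^2$, and hence (as recalled after Zermelo's theorem) every play of Chomp on $T^2$ terminates after finitely many moves. Since $B$ can always answer inside a copy, the game can only end in a position in which the sole surviving element is $0$; by the invariant this position occurs on $A$'s turn, forcing $A$ to remove $0$ and lose. The one remaining case is that $A$ plays $0$ directly at some earlier stage, but then $A$ eliminates the minimum and loses immediately. In either case $B$ wins.

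The main obstacle I anticipate is not the strategy itself --- which is the classical symmetry idea --- but making the two supporting facts fully rigorous: first, that removing the principal up-set of $(i,b)$ leaves both the minimum and the opposite copy untouched (so that the mirrored response is always legal and $0$ is never forced upon $B$), and second, that the play is guaranteed to be finite even when $T$ is infinite, so that $A$ is genuinely forced to take $0$ in the end. Both reduce to the definition of $\leq_{T^2}$ and the well-partial-order hypothesis, so I expect them to go through cleanly once stated carefully.
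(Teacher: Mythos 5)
Your proof is correct and is essentially the same argument as the paper's: the mirroring strategy in which $B$ answers $A$'s move $(i,b)$ with the twin $(1-i,b)$, so that the two copies of $T$ remain identical after each of $B$'s moves and finiteness of play (from the well-partial-order hypothesis) eventually forces $A$ to take $0$. The paper states this more tersely, while you spell out the supporting facts (up-sets stay within a copy, legality of the mirrored reply, termination), all of which are accurate.
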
\begin{proof} Player $  B $ simply has to copy player $A$'s moves but in the ``opposite branch" of the poset: if player $A$ chooses the element $(0,c) $, player $B$ will choose $(1,c)$ on the next move. In this way, the elements of the form $(a,b)$ will be exhausted just before one of player $A$'s turns, forcing player $A$ to choose $0 $ and lose. \end{proof}
The Sprague–Grundy theorem, which states that any impartial game of perfect information under the normal play convention (the last player able to move is the winner) is equivalent to playing a poset game where the poset is a well order (this is the popular game \textit{Nim} with one ``heap''), formalizes the idea of the previous proof. Although this is a very useful tool, we will not explicitly use it in the rest of the paper.
\subsection{Set theory} A reader unfamiliar with the basic properties of ordinals or other set-theoretic concepts may find what is not defined here, for example, in \cite{Jech2003}.

As is well known, the usual sum and product of ordinals lack certain useful properties that the sum and product of natural numbers possess. The natural sum and product of ordinals extend the sum and product of natural numbers and preserve commutativity and other properties of interest. Before the definition, it is worth recalling the following theorem:
\begin{theorem}[Cantor normal form for base $\omega$] For every non-zero ordinal $\alpha$, there is a unique positive natural number $k$, a unique strictly decreasing finite sequence of ordinals $\alpha_1>\cdots>\alpha_k$ and a unique sequence $n_1, \dots , n_k$ of positive natural numbers such that $$ \alpha=\omega^{\alpha_1}\cdot n_1+\cdots+\omega^{\alpha_k}\cdot n_k.$$
\end{theorem}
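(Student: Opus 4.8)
The plan is to prove existence and uniqueness separately, both by transfinite induction on $\alpha$, resting on two standard facts about ordinal arithmetic that I would take for granted (see \cite{Jech2003}): the ordinal division algorithm---for any ordinals $\alpha$ and $\gamma>0$ there are unique $q,r$ with $\alpha=\gamma\cdot q+r$ and $r<\gamma$---and the fact that $\beta\mapsto\omega^\beta$ is strictly increasing and continuous with $\omega^{\beta+1}=\sup_{n<\omega}\omega^\beta\cdot n$; in particular each $\omega^\beta$ is additively indecomposable, i.e.\ a sum of finitely many ordinals below $\omega^\beta$ stays below $\omega^\beta$.

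For existence, fix $\alpha>0$ and look at the set $\{\beta:\omega^\beta\leq\alpha\}$. It is nonempty (it contains $0$, since $\omega^0=1\leq\alpha$) and bounded above (since $\omega^{\alpha+1}>\alpha$), so it has a maximum, which I call $\alpha_1$; thus $\omega^{\alpha_1}\leq\alpha<\omega^{\alpha_1+1}$. Applying the division algorithm with $\gamma=\omega^{\alpha_1}$ gives $\alpha=\omega^{\alpha_1}\cdot n_1+r$ with $r<\omega^{\alpha_1}$. The quotient $n_1$ must be a positive natural number: it is nonzero because $\omega^{\alpha_1}\leq\alpha$, and if $n_1\geq\omega$ then $\omega^{\alpha_1}\cdot n_1\geq\omega^{\alpha_1}\cdot\omega=\omega^{\alpha_1+1}>\alpha$, a contradiction. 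Since $r<\omega^{\alpha_1}\leq\alpha$, the inductive hypothesis applies to $r$; if $r=0$ we stop, and otherwise concatenating $\omega^{\alpha_1}\cdot n_1$ with the Cantor normal form of $r$ yields a representation of $\alpha$. The leading exponent of the form for $r$ is strictly below $\alpha_1$ because $r<\omega^{\alpha_1}$, so the exponents stay strictly decreasing.

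For uniqueness, suppose $\alpha=\omega^{\alpha_1}\cdot n_1+\cdots+\omega^{\alpha_k}\cdot n_k$ is any representation of the stated form. The crux is that the leading exponent is forced by $\alpha$ alone. Each tail term satisfies $\omega^{\alpha_j}\cdot n_j<\omega^{\alpha_j+1}\leq\omega^{\alpha_1}$ for $j\geq 2$ (using $\alpha_j<\alpha_1$), so by additive indecomposability the whole tail $\omega^{\alpha_2}\cdot n_2+\cdots+\omega^{\alpha_k}\cdot n_k$ is $<\omega^{\alpha_1}$. Hence $\omega^{\alpha_1}\leq\alpha<\omega^{\alpha_1}\cdot n_1+\omega^{\alpha_1}=\omega^{\alpha_1}\cdot(n_1+1)\leq\omega^{\alpha_1+1}$, which shows $\alpha_1$ is the unique $\beta$ with $\omega^\beta\leq\alpha<\omega^{\beta+1}$. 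By uniqueness in the division algorithm, $n_1$ and the remainder $r=\omega^{\alpha_2}\cdot n_2+\cdots+\omega^{\alpha_k}\cdot n_k$ are then determined, and applying the inductive hypothesis to $r<\alpha$ forces the remaining exponents and coefficients (and the number $k$) to coincide.

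I expect the main point to be bookkeeping rather than a genuine obstacle: pinning down the leading exponent via the two-sided bound $\omega^{\alpha_1}\leq\alpha<\omega^{\alpha_1+1}$ and checking that the division quotient is finite. Both reduce to the single estimate $\omega^{\beta}\cdot n<\omega^{\beta+1}$ for $n<\omega$ together with additive indecomposability of $\omega^\beta$, so once those are in hand the two inductions run without difficulty.
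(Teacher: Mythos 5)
Your proof is correct. Note that the paper does not prove this statement at all: it is recalled in the preliminaries as a classical fact, with the reader pointed to Jech's \emph{Set Theory} for background. Your argument is precisely the standard proof found there --- existence by transfinite induction, taking $\alpha_1$ maximal with $\omega^{\alpha_1}\leq\alpha$ and applying the ordinal division algorithm, then uniqueness by pinning down the leading exponent via additive indecomposability of $\omega^{\alpha_1}$ and the two-sided bound $\omega^{\alpha_1}\leq\alpha<\omega^{\alpha_1+1}$ --- so there is nothing to reconcile between the two.
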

For obvious reasons, we will often refer to the \textit{terms} of an ordinal $\alpha$ to mean the corresponding ordinals $\omega^{\alpha_i}\cdot n_i$, and we will call the natural numbers $n_i$ the \textit{coefficients} of these terms.

\begin{definition}[Natural sum and product]
    Let $\alpha$ and $\beta$ be ordinals with the following Cantor normal form representation: $$\alpha=\omega^{\gamma_1}\cdot n_1+\cdots+\omega^{\gamma_k}\cdot n_k \ \text{ and } \ \beta=\omega^{\gamma_1}\cdot m_1+\cdots+\omega^{\gamma_k}\cdot m_k,$$ where some of the coefficients may be 0. Then, we define $$\alpha\oplus\beta=\omega^{\gamma_1}\cdot(n_1+m_1)+\cdots+\omega^{\gamma_k}\cdot(n_k+m_k).$$ If $\alpha=\omega^{\alpha_1}\cdot a_1+\cdots+\omega^{\alpha_k}\cdot a_k$ and $\beta=\omega^{\beta_1}\cdot b_1+\cdots+\omega^{\beta_l}\cdot b_l$, we define $$\alpha\otimes\beta=\bigoplus_{i\leq k, \ j\leq l}\omega^{\alpha_i\oplus\beta_j}\cdot a_i\cdot b_j.$$
\end{definition}
The proof of the following lemma, which states the main properties we will use later, can be found in \cite[Lemma 3.3]{DEJONGH1977195}. 
\begin{lemma}\label{lemamon}
    For every ordinal $\sigma$, the ordinal $\omega^\sigma$ is closed under $\oplus$ and $\omega^{\omega^\sigma}$ is also closed under $\otimes$. Moreover, for all ordinals $\alpha,\beta$ and $\gamma$,\begin{itemize}
    \item $(\alpha\oplus\beta)\oplus\gamma=\alpha\oplus(\beta\oplus\gamma)$ and $(\alpha\otimes\beta)\otimes\gamma=\alpha\otimes(\beta\otimes\gamma)$,
    \item $ \alpha\oplus\beta=\beta\oplus\alpha$ and $\alpha\otimes\beta=\beta\otimes\alpha$,
    \item $\alpha\otimes(\beta\oplus\gamma)=(\alpha\otimes\beta)\oplus(\alpha\otimes\gamma)$.
    \end{itemize}
\end{lemma}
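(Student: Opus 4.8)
The plan is to translate the two combinatorial definitions into ordinary algebra on the free commutative monoid associated to Cantor normal forms, after which every identity reduces to the arithmetic of $(\mathbb{N},+,\cdot)$. To each ordinal $\alpha$ with Cantor normal form $\omega^{\gamma_1}\cdot n_1+\cdots+\omega^{\gamma_k}\cdot n_k$ I associate the finitely supported function $f_\alpha\colon\mathrm{Ord}\to\mathbb{N}$ given by $f_\alpha(\gamma_i)=n_i$ and $f_\alpha(\gamma)=0$ otherwise. By uniqueness of the Cantor normal form, $\alpha\mapsto f_\alpha$ is a bijection onto the class $F$ of finitely supported $\mathbb{N}$-valued functions on the ordinals. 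I would then show that $\oplus$ corresponds to pointwise addition in $F$ and that $\otimes$ corresponds to convolution with respect to $\oplus$ on the indices, so that $(\mathrm{Ord},\oplus,\otimes)$ is identified with the monoid semiring of $(\mathrm{Ord},\oplus)$ over $\mathbb{N}$; the stated properties are then the standard facts that such a semiring is commutative, associative and distributive.

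First I would treat the additive statements. Straight from the definition, the coefficient of $\omega^\gamma$ in $\alpha\oplus\beta$ is $f_\alpha(\gamma)+f_\beta(\gamma)$, that is, $f_{\alpha\oplus\beta}=f_\alpha+f_\beta$ pointwise. Commutativity and associativity of $\oplus$ then follow at once from those of addition in $\mathbb{N}$ applied coordinatewise. For the closure assertion I would use the characterization $\alpha<\omega^\sigma$ iff $\mathrm{supp}(f_\alpha)\subseteq\sigma$ (equivalently, every exponent in the normal form of $\alpha$ is $<\sigma$), which follows from $\omega^{\gamma_1}\le\alpha<\omega^{\gamma_1+1}$ for the leading exponent $\gamma_1$. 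Since $\mathrm{supp}(f_\alpha+f_\beta)\subseteq\mathrm{supp}(f_\alpha)\cup\mathrm{supp}(f_\beta)$, the set $\omega^\sigma$ is closed under $\oplus$.

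Next the multiplicative statements. Unfolding the definition of $\otimes$ and letting the outer $\bigoplus$ collect terms with equal exponents, the coefficient of $\omega^\delta$ in $\alpha\otimes\beta$ is
\[
\sum_{\mu\oplus\nu=\delta} f_\alpha(\mu)\,f_\beta(\nu),
\]
which is exactly the convolution $(f_\alpha* f_\beta)(\delta)$ over the monoid $(\mathrm{Ord},\oplus)$, the sum being finite because $f_\alpha$ and $f_\beta$ have finite support. Since $(\mathrm{Ord},\oplus)$ is a commutative monoid by the previous step, the routine monoid-semiring computations give commutativity and associativity of $*$ and distributivity of $*$ over $+$, using only the corresponding properties of $(\mathbb{N},+,\cdot)$ together with the commutativity and associativity of $\oplus$; transporting back through the bijection yields commutativity and associativity of $\otimes$ and the distributive law $\alpha\otimes(\beta\oplus\gamma)=(\alpha\otimes\beta)\oplus(\alpha\otimes\gamma)$. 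For closure I would note that $\alpha<\omega^{\omega^\sigma}$ iff $\mathrm{supp}(f_\alpha)\subseteq\omega^\sigma$; every exponent occurring in $\alpha\otimes\beta$ has the form $\mu\oplus\nu$ with $\mu,\nu<\omega^\sigma$, and by the already-proved closure of $\omega^\sigma$ under $\oplus$ we get $\mu\oplus\nu<\omega^\sigma$, whence $\omega^{\omega^\sigma}$ is closed under $\otimes$.

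The hard part is not any individual identity but getting the dictionary right: verifying the convolution formula, in particular the bookkeeping of the collisions $\alpha_i\oplus\beta_j=\delta$ that the outer $\bigoplus$ silently accumulates, and being at ease with the fact that $F$ is a proper class. The latter is harmless, since any one identity involves only finitely many ordinals $\alpha,\beta,\gamma$, all of whose relevant exponents lie below some fixed ordinal; restricting to ordinals below a suitable $\omega^\sigma$ for the additive identities, and below a suitable $\omega^{\omega^\sigma}$ for the multiplicative ones, confines each computation to a genuine set, closed under the relevant operation by the closure statements above. With this reduction in place, the whole lemma collapses to the elementary arithmetic of the natural numbers.
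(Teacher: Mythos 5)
Your argument is correct, but be aware that the paper does not prove this lemma at all: it delegates the proof entirely to the literature, citing Lemma 3.3 of de Jongh and Parikh, so your proposal is a self-contained substitute rather than a parallel of any in-paper reasoning. Your route is the standard algebraic dictionary: Cantor normal form coefficients give a bijection $\alpha\mapsto f_\alpha$ onto finitely supported $\mathbb{N}$-valued functions, under which $\oplus$ becomes pointwise addition and $\otimes$ becomes convolution over the exponent monoid, identifying $(\mathrm{Ord},\oplus,\otimes)$ with the monoid semiring of $(\mathrm{Ord},\oplus)$ over $\mathbb{N}$. You get the logical ordering right, which is the one place a careless write-up could go circular: the commutative-monoid structure of the exponents under $\oplus$ is established first (from coordinatewise arithmetic in $\mathbb{N}$) and only then invoked for the convolution identities; likewise the closure of $\omega^\sigma$ under $\oplus$ is proved first and then reused, via the characterization $\alpha<\omega^{\omega^\sigma}$ iff $\mathrm{supp}(f_\alpha)\subseteq\omega^\sigma$, to get closure of $\omega^{\omega^\sigma}$ under $\otimes$. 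The steps you leave as ``routine'' (commutativity, associativity and distributivity of convolution over a commutative index monoid) are genuinely routine, so I would not count them as gaps, though a referee might ask you to display the triple-sum computation for associativity once. What your approach buys: it is self-contained, all five identities fall out of a single dictionary, and that dictionary is exactly the bookkeeping the paper uses informally later (e.g.\ the decomposition of elements of $\mathcal{S}^\sigma$ in equation (1) and the ``factor out $\Gamma$-closed terms'' lemma), so writing it out has independent value for the reader. What the citation buys the paper: brevity, and an external source whose treatment of $\oplus$ and $\otimes$ is embedded in the theory of well partial orders and maximal order types, which is the context the rest of the paper lives in.
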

Therefore, for any ordinal of the form $\omega^\sigma$, we have a monoid structure $(\omega^\sigma; \oplus, 0)$.

\begin{notation} Given a set $\Gamma\subset\omega^\sigma$ of ordinals, we denote $$ \mathcal{S}^\sigma:=\langle \Gamma\rangle^\sigma =\{\alpha_1\otimes \gamma_1 \ \oplus \ \cdots \ \oplus \alpha_n\otimes \gamma_n \mid \gamma_i\in\Gamma \text{ and } \alpha_i\otimes\gamma_i\in\omega^\sigma\}.$$
\end{notation}
By \hyperref[lemamon]{Lemma \ref{lemamon}}, $(\mathcal{S}^\sigma;\oplus,0)$ is a submonoid of $(\omega^\sigma;\oplus,0)$. For example, if $ \Gamma=\{p_1,\dots,p_n\}\subset\omega$, any $\beta\in\mathcal{S}^\sigma$ can be written as \begin{equation}\label{ecdes}\beta=\omega^{\alpha_1}\cdot(p_1\cdot b_1^1+\cdots+p_n\cdot b_n^1)+\cdots+\omega^{\alpha_m}\cdot(p_1\cdot b_1^m+\cdots+p_n\cdot b_n^m), 
\end{equation}for some ordinals $\sigma>\alpha_1>\cdots>\alpha_m$ and $b_i^j\in\omega $.
\begin{definition}
    Given an ordinal $\alpha = \omega^{\alpha_1} \cdot a_1 + \cdots + \omega^{\alpha_n} \cdot a_n$, we define the function $\overline{e}: \text{Ord} \to \text{Ord}$ by $\overline{e}(\alpha) = \alpha_1$ and $\overline{e}(0)=0$. For a set $\Gamma$ of ordinals, we define $\overline{e}(\Gamma) = \bigcup_{\gamma\in\Gamma}\overline{e}(\gamma) $.
\end{definition}
Thus, the least ordinal $\sigma$ such that $\Gamma \subset \omega^{\sigma}$ is $\overline{e}(\Gamma)+1$ if $\overline{e}(\Gamma) \in \Gamma$ (for instance, if $\Gamma$ is finite), and $\overline{e}(\Gamma)$ otherwise.

Regarding the poset structure, we will need a notion of subtraction to define the order on the submonoids. It is well known that we cannot define a (left and right) subtraction on ordinals; however, the following notion will be sufficient for our purposes here.

\begin{definition}[Ordinal subtraction]
    For all ordinals $\alpha$ and $\beta$ such that $\beta\geq\alpha$, we define $\beta-\alpha$ as the unique ordinal $\gamma$ satisfying $\beta=\alpha+\gamma$.
\end{definition}
Equivalently, we can define $\beta-\alpha:=\text{ot}(\beta\backslash\alpha)$ (where ``$\text{ot}$'' denotes the order type), which is perhaps a more natural definition. It is not difficult to check that for the case $\sigma = 1 $, the following lemma provides the same poset structure as when we use the usual subtraction of natural numbers.
\begin{lemma}\label{lemposetstr}
    Let $\Gamma\subset\omega^\sigma$ be a set of ordinals. Then, $\mathcal{S}^\sigma=\langle \Gamma\rangle^\sigma$ has a natural poset structure given by $\alpha\leq_{\mathcal{S}^\sigma}\beta $ iff $\alpha\in\beta $ and $\beta-\alpha\in\mathcal{S}^\sigma $.
\end{lemma}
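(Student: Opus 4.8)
The plan is to check that $\leq_{\mathcal{S}^\sigma}$ is a partial order, i.e.\ reflexive, antisymmetric and transitive; I read $\alpha\leq_{\mathcal{S}^\sigma}\beta$ as ``$\alpha\leq\beta$ as ordinals and $\beta-\alpha\in\mathcal{S}^\sigma$'', so that $\alpha=\beta$ is permitted (then $\beta-\alpha=0\in\mathcal{S}^\sigma$ because $\mathcal{S}^\sigma$ is a submonoid). Reflexivity is then immediate, and antisymmetry uses only the linearity of the ordinals: $\alpha\leq_{\mathcal{S}^\sigma}\beta$ and $\beta\leq_{\mathcal{S}^\sigma}\alpha$ force $\alpha\leq\beta\leq\alpha$, hence $\alpha=\beta$. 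The whole content is transitivity. Given $\alpha\leq_{\mathcal{S}^\sigma}\beta\leq_{\mathcal{S}^\sigma}\gamma$ (we may assume $\alpha<\beta<\gamma$, the degenerate cases being trivial), the fact that ordinal addition is left-cancellative, applied to $\beta=\alpha+(\beta-\alpha)$ and $\gamma=\beta+(\gamma-\beta)$, yields the identity $\gamma-\alpha=(\beta-\alpha)+(\gamma-\beta)$ with \emph{ordinary} sum, so everything reduces to showing this sum lies in $\mathcal{S}^\sigma$. The difficulty is concentrated here: by Lemma \ref{lemamon} the set $\mathcal{S}^\sigma$ is closed under the natural sum $\oplus$, but it is \emph{not} closed under ordinary addition in general --- for instance $\langle\{\omega+1\}\rangle^{2}$ contains $\omega+1$ but not $(\omega+1)+(\omega+1)=\omega\cdot2+1$ --- so no blanket closure property is available and I must use that $\alpha,\beta,\gamma$ together with both differences lie in $\mathcal{S}^\sigma$.

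The tool I would establish first is an explicit form of ordinal subtraction in Cantor normal form. Write $c_e(\xi)$ for the coefficient of $\omega^e$ in the Cantor normal form of $\xi$, and let $d_1$ be the largest exponent at which $\alpha$ and $\beta$ disagree (equivalently $d_1=\overline{e}(\beta-\alpha)$; note $c_{d_1}(\alpha)<c_{d_1}(\beta)$ because $\alpha<\beta$). A short computation with ordinal addition gives
\[
c_e(\beta-\alpha)=\begin{cases}0,& e>d_1,\\ c_{d_1}(\beta)-c_{d_1}(\alpha),& e=d_1,\\ c_e(\beta),& e<d_1;\end{cases}
\]
that is, $\beta-\alpha$ copies $\beta$ strictly below $d_1$, carries the reduced positive coefficient $c_{d_1}(\beta)-c_{d_1}(\alpha)$ at $d_1$, and vanishes above $d_1$.

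Applying this to all three differences and setting also $d_2=\overline{e}(\gamma-\beta)$, I split into cases according to $d_1$ and $d_2$. If $d_2>d_1$, a direct comparison of normal forms shows that $\gamma-\alpha$ and $\gamma-\beta$ have \emph{identical} Cantor normal forms, so $\gamma-\alpha=\gamma-\beta\in\mathcal{S}^\sigma$ and we are done. If $d_2\leq d_1$, the formula above forces $\gamma-\alpha$ to agree with $\gamma$ at every exponent below $d_1$, to equal $\omega^{d_1}\cdot\big(c_{d_1}(\gamma)-c_{d_1}(\alpha)\big)$ at $d_1$, and to vanish above $d_1$; in other words $\gamma-\alpha$ is the natural sum of a single leading block with the truncation $\gamma|_{<d_1}$ of $\gamma$ to exponents below $d_1$.

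The main obstacle is exactly this last case. For $\Gamma\subset\omega$ it resolves cleanly: by \eqref{ecdes} the elements of $\mathcal{S}^\sigma$ are precisely the ordinals all of whose Cantor coefficients lie in the numerical semigroup $\langle\Gamma\rangle\subseteq(\omega,+)$; the coefficients of $\gamma-\alpha$ below $d_1$ are those of $\gamma\in\mathcal{S}^\sigma$, while the coefficient at $d_1$ is built, via the additivity of $c_{d_1}$, from the leading coefficients of $\beta-\alpha$ and $\gamma-\beta$, hence lies in $\langle\Gamma\rangle$ as well; so every coefficient of $\gamma-\alpha$ is in $\langle\Gamma\rangle$ and $\gamma-\alpha\in\mathcal{S}^\sigma$. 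For general $\Gamma$ the defining constraints of $\mathcal{S}^\sigma$ couple coefficients at different exponents (membership is no longer decidable coefficientwise, and $\mathcal{S}^\sigma$ need not be closed under truncation), so this argument does not transfer directly. Here I expect to proceed by induction on the Cantor normal form, peeling the leading term and combining closure under $\oplus$ (Lemma \ref{lemamon}) with the membership of $\gamma$, to show that the spliced ordinal $\omega^{d_1}\cdot(c_{d_1}(\gamma)-c_{d_1}(\alpha))\oplus\gamma|_{<d_1}$ is still generated by $\Gamma$: its low part is inherited from the generators that build $\gamma$ and its leading block from those that build $\beta-\alpha$. Reconstructing membership of an ordinary-sum difference from the way $\gamma$ and $\beta-\alpha$ decompose over $\Gamma$ is the crux of the proof.
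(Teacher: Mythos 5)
Your preparatory work is correct and carefully done: the reduction of transitivity to showing that the ordinary sum $(\beta-\alpha)+(\gamma-\beta)$ lies in $\mathcal{S}^\sigma$, the Cantor-normal-form formula for ordinal subtraction, the observation that the case $d_2>d_1$ collapses to $\gamma-\alpha=\gamma-\beta$, and the coefficientwise characterization of $\langle\Gamma\rangle^\sigma$ when $\Gamma\subset\omega$ (via (\ref{ecdes})), together with the instructive example $\langle\{\omega+1\}\rangle^{2}$ showing that $\mathcal{S}^\sigma$ need not be closed under ordinary addition. But the proposal does not prove the lemma. The statement is for an \emph{arbitrary} $\Gamma\subset\omega^\sigma$, and in the only hard case ($d_2\leq d_1$ with $\Gamma\not\subset\omega$) you stop at a declaration of intent: ``Here I expect to proceed by induction on the Cantor normal form\ldots is the crux of the proof.'' That unproved step is not a loose end; it is the entire content of the lemma beyond the numerical-semigroup situation, precisely because, as you yourself note, membership in $\mathcal{S}^\sigma$ is then no longer decidable coefficient by coefficient and $\mathcal{S}^\sigma$ is not closed under truncation. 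Identifying the crux is not the same as resolving it.

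For comparison, the paper's proof is devoted exactly to that missing step, and it proceeds differently from what you sketch. Writing $\beta=\alpha+\zeta$ and $\delta=\beta+\xi$ with $\zeta,\xi\in\mathcal{S}^\sigma$, it takes a generator representation $\delta=\delta_1\otimes\gamma_1\oplus\cdots\oplus\delta_n\otimes\gamma_n$ of the \emph{top} element $\delta$ and exploits the fact that, since $\delta=\alpha+(\zeta+\xi)$, each Cantor coefficient of $\delta$ splits into a part contributed by $\alpha$ and a part contributed by $\zeta+\xi$ (the possible patterns being $a_i$, $a_i+z_i$, $a_i+z_i+x_i$, $z_i$, $z_i+x_i$, $x_i$). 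Zeroing out the $\alpha$-contributions inside the multipliers $\delta_j$ produces ordinals $\delta_j'$ with $\delta_1'\otimes\gamma_1\oplus\cdots\oplus\delta_n'\otimes\gamma_n=\zeta+\xi$, which is exactly a witness that $\delta-\alpha\in\mathcal{S}^\sigma$. So the ingredient you are missing is supplied there by surgery on a $\Gamma$-representation of $\delta$, rather than by splicing representations of $\gamma$ and $\beta-\alpha$ as your sketch proposes; to complete your argument you would need to show that the additive decomposition of $\delta$'s coefficients can be realized inside some generator representation of $\delta$, so that deleting $\alpha$'s share leaves a generator representation of $\zeta+\xi$.
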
\begin{proof} Reflexivity and antisymmetry are straightforward. To prove transitivity, take $\alpha,\beta,\delta\in\mathcal{S}^\sigma$ such that $\alpha\leq_{\mathcal{S}^\sigma}\beta$ and $\beta\leq_{\mathcal{S}^\sigma}\delta$. Then, there are $\zeta,\xi\in\mathcal{S}^\sigma$ satisfying $\beta=\alpha+\zeta$ and $\delta=\beta+\xi$, so $\delta=\alpha+(\zeta+\xi)$. If the coefficients of the ordinals in \(\Gamma\) are denoted by \(g_i\), the coefficients of \(\alpha\) will be sums of natural numbers of the form \(g_i \cdot a_i\). And similarly for $\beta$, $\delta$, $\zeta$ and $\xi$, with $b_i$, $d_i$, $z_i$ and $x_i$ instead of $a_i$, respectively. Therefore, the $d_i$'s are of the form $a_i$, $a_i+z_i$, $a_i+z_i+x_i $, $z_i$, $z_i+x_i $ or $x_i$. If we replace the \(a_i\)'s with 0, we obtain precisely the ordinal \(\zeta + \xi\). That is, if \(\delta = \delta_1 \otimes \gamma_1 \oplus \cdots \oplus \delta_n \otimes \gamma_n\), by replacing the \(a_i\)'s in the \(d_i\)'s of the ordinals \(\delta_j\) with 0, we obtain new ordinals \(\delta_j'\) such that 
\(
\delta_1' \otimes \gamma_1 \oplus \cdots \oplus \delta_n' \otimes \gamma_n = \zeta + \xi
\), which proves that $\alpha\leq_{\mathcal{S}^\sigma}\delta$.
 \end{proof}
To slightly simplify the notation, we will denote the posets by $(\mathcal{S}^\sigma;\leq_{\mathcal{S}^\sigma})$ instead of $(\langle \Gamma \rangle^\sigma;\leq_{\langle \Gamma \rangle^\sigma})$.

\section{Chomp on $(P\times T;\leq_{P\times T})$}
\subsection{Basic results} As mentioned in the introduction, we will work with posets that possess a global minimum and, specifically, with posets that correspond to the Cartesian product of two posets endowed with the lexicographic order. The first thing we can observe is the following simple result:
\begin{proposition}\label{propsen}
Let $(P;\leq_P)$ and $(T;\leq_T)$ be well partial orders with global minima. If player $A$ has a winning strategy for Chomp on $T$, then player \textit{A} also has a winning strategy for Chomp on $(P\times T;\leq_{P\times T})$.
\end{proposition}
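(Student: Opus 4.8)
The plan is to read $P\times T$, in the product order $\leq_{P\times T}$, through its bottom copy. With $0_P,0_T$ the two minima, the global minimum of $P\times T$ is $(0_P,0_T)$, and $I:=\{0_P\}\times T$ is an order ideal isomorphic to $T$ via $t\mapsto(0_P,t)$. The structural point is that a move at $(p,t)$ deletes the filter $\{(p',t')\mid p'\ge_P p \text{ and } t'\ge_T t\}$, which meets $I$ precisely when $p=0_P$; hence a move at the bottom level induces a genuine $T$-move on the trace $I$, whereas every other (``off-$I$'') move removes a filter disjoint from $I$ and leaves the trace untouched. My aim is to have $A$ play her winning $T$-strategy $\sigma$ on this trace while answering $B$'s off-$I$ moves by designated moves that keep the trace fixed, so that the play witnessed on $I$ is exactly a run of $\sigma$ and $B$ is eventually forced to take $(0_P,0_T)$ and lose.

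Concretely I would define $A$'s strategy through an invariant on game positions and verify that it wins by induction on the game, which is well founded because $P\times T$ is a well partial order, so every play is finite and, by Zermelo's theorem, the game is determined. The invariant maintained after each of $A$'s moves would be: the trace on $I$ is a second-player win of the $T$-game (a position from which the player to move loses) with $B$ to move, and the off-$I$ part carries a pairing assigning to every available $B$-move a legal $A$-answer that restores the invariant. Against a bottom move of $B$ — a legitimate $T$-move on the trace — $A$ replies by $\sigma$; against an off-$I$ move she replies by its paired move. If this invariant can be kept, then $B$ never closes out the trace on his own turn, and once the off-$I$ material is gone $B$ is left facing a second-player win of the $T$-game, forcing him onto the minimum.

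The main obstacle is the entanglement of the two regions. A bottom move at $(0_P,t)$ does not merely advance the trace: it deletes the whole slab $P\times\{t'\mid t'\ge_T t\}$, erasing material from every higher copy simultaneously, so $A$'s $\sigma$-replies and her pairing replies interfere, and for an arbitrary $P$ — already for $P=\omega$, which has no maximal element — the off-$I$ region carries no symmetric involution to mirror against in the clean manner of Proposition \ref{propsim}. That the difficulty is genuine is visible in the failure of the naive version: if $A$ simply plays $\sigma$'s first move at the bottom level she can lose, and for a small $T$ with no global maximum the only winning first move on $P\times T$ turns out to be an off-$I$ move rather than a bottom one. The crux is therefore to construct the pairing — equivalently, a potential on the off-$I$ part — so that $A$ always has a legal paired answer, the induced bottom play is faithful to $\sigma$, and the off-$I$ material is exhausted precisely on $B$'s turns; I expect this to rest on a parity/tempo analysis of the off-$I$ filters, matching them against one another in the spirit of Proposition \ref{propsim} wherever they split into isomorphic copies, together with the bookkeeping forced by the slab side-effects of the $\sigma$-moves.
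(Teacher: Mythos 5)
There is a genuine gap here, and it is located at the very first step: you have misread the order. The paper's $\leq_{P\times T}$ is the \emph{lexicographic} order --- $(p,q)\leq_{P\times T}(p',q')$ iff $p<_P p'$, or $p=p'$ and $q\leq_T q'$ --- not the componentwise product order your proposal analyzes. Your description of the filter of a move, $\{(p',t')\mid p'\geq_P p \text{ and } t'\geq_T t\}$, and consequently the ``slab'' side-effect of a bottom move, are filters for the product order. Under the lexicographic order the picture is completely different: since $0_P$ is the global minimum of $P$, every element with first coordinate different from $0_P$ lies strictly above $(0_P,t)$ for \emph{any} $t$. Hence the filter generated by $(0_P,t)$ is $\bigl((P\setminus\{0_P\})\times T\bigr)\cup\bigl(\{0_P\}\times\{q\mid t\leq_T q\}\bigr)$: a single bottom move annihilates the entire off-$I$ region at once. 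This collapses the proof to one line, which is exactly what the paper does: if $t$ is a winning first move for $A$ on $T$, then $A$ opens with $(0_P,t)$, and the remaining poset is $\{0_P\}\times\{q\in T\mid t\not\leq_T q\}$, isomorphic to the position in $T$ after the move $t$; player $A$ then simply follows the winning $T$-strategy. All of the obstacles you identify --- the slab interference, the absence of an involution for $P=\omega$, the claimed example where the only winning first move is off-$I$ --- are artifacts of the product-order reading and do not arise for the statement actually being proved.

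Even granting your reading, the proposal would not stand as a proof: the crucial pairing (or potential) on the off-$I$ part is never constructed, and you explicitly flag it as unresolved, so the argument is a program rather than an argument. (Whether the analogue of the proposition holds for the componentwise order is a separate and genuinely harder question, precisely because of the entanglement you describe --- but it is not the question asked.) The fix is simply to recompute the filter of $(0_P,t)$ in the lexicographic order; once you do, no invariant, induction, or mirroring in the spirit of Proposition \ref{propsim} is needed.
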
\begin{proof}If $0$ is the global minimum of $P$ and $t$ is a winning move for \textit{A} when playing on $(T;\leq_T)$, then $(0,t)$ is a winning move for \textit{A} on $(P\times T;\leq_{P\times T})$: the remaining poset is isomorphic to the poset that results after the move \(t\) in \((T; \leq_T)\).
\end{proof}
Apart from the above, in general we cannot say much more:
\begin{example}
    Consider the following posets:
\begin{figure}[h!]
\def\svgwidth{1\columnwidth}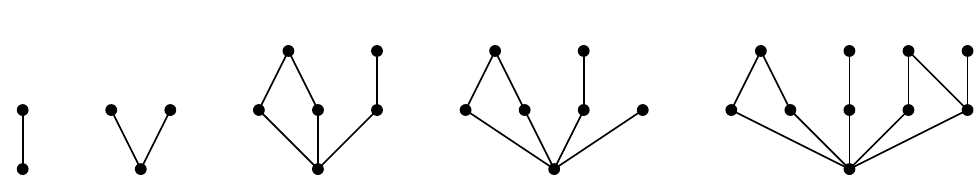  \caption{Hasse diagram of some posets.}\label{posets}  
\end{figure}
\begin{enumerate}
    \item If $P$ is $P_1$ and $T$ is $P_2$, then player \textit{A} has the winning strategy on $(P;\leq_P)$, player \textit{B} has it on $(T;\leq_T)$, and $(1,0)$ is a winning move for player \textit{A} on $(P\times T;\leq_{P\times T})$.
    \item If $P$ is $P_4$ and $T$ is $P_2$, then player \textit{A} has the winning strategy on $(P;\leq_P)$, player \textit{B} has it on $(T;\leq_T)$, and player \textit{B} also has it on $(P\times T;\leq_{P\times T})$.
    \item If $P$ is $P_2$ and $T$ is $P_2$, $P_3$ or $P_5$, then player \textit{B} has the winning strategy on $(P;\leq_P)$, $(T;\leq_T)$, and $ (P\times T;\leq_{P\times T})$.
    \item If $P $ is $P_3$ and $T$ is $P_2$, then player \textit{B} has the winning strategy on $(P;\leq_P)$ and $(T;\leq_T)$, but $(1,1)$ is a winning move for player \textit{A} on $(P\times T;\leq_{P\times T})$. And the same holds if $P$ is $P_3$ and $T$ is $P_3$ or if $P$ is $P_5$ and $T$ is $P_5$: player \textit{A} can win with the move $(1,3)$ in the first case and with $(1,1)$ in the second.
\end{enumerate}
\end{example}

From now on we will work with ``powers'' of posets:
\begin{definition}\label{defpow}
    Let $(P;\leq_P)$ be a poset with global minimum $0$. We define $(P^\sigma;\leq_{P^\sigma})$ as follows:
    \begin{itemize}
        \item If $\sigma=0$, $P^0=\{0\}$ and if $\sigma=1$, $(P^1;\leq_{P^1})=(P;\leq_P)$.
        \item If $\sigma=\delta+1>1$, $(P^\sigma;\leq_{P^\sigma})=(P\times P^\delta;\leq_{P\times P^\delta})$.
        \item If $\sigma$ is a limit ordinal, $P^\sigma=\bigcup_{\alpha\in\sigma}P^{\alpha+1}\backslash(\{0\}\times P^\alpha)$. The order is defined as expected: for all $p,q\in P^\sigma$, if $p,q\in P^\alpha$ for some $\alpha\in\sigma$ and $p\leq_{P^\alpha}q $, then $p\leq_{P^\sigma}q$; if $p\in P^\alpha$ and $q\in P^\beta$ with $\alpha\in\beta\in\sigma$, then $p\leq_{P^\sigma}q$.
    \end{itemize}
\end{definition}
\begin{proposition}\label{profini}
    If \((P; \leq_P)\) is a well partial order with a global minimum $0$, then, for any ordinal \(\sigma\), \((P^\sigma; \leq_{P^\sigma})\) is a well partial order and contains a global minimum.
\end{proposition}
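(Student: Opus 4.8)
The plan is to argue by transfinite induction on $\sigma$, using throughout the standard characterization that a partial order is a well partial order if and only if every infinite sequence $x_0,x_1,\dots$ is \emph{good}, meaning that there exist indices $i<j$ with $x_i\leq x_j$; this is equivalent to the absence of infinite antichains and infinite descending chains. I will also invoke the companion fact that every infinite sequence in a well partial order admits an infinite non-decreasing subsequence. This follows by calling an index $i$ \emph{terminal} when $x_i\not\leq x_j$ for all $j>i$: there can be only finitely many terminal indices (infinitely many would constitute a bad sequence, contradicting the well partial order property), and past the last one each index is dominated by a later index, so a non-decreasing subsequence can be built greedily. Antisymmetry of $\leq_{P^\sigma}$ is inherited at each stage, so the only real content is verifying the good-sequence condition and exhibiting a global minimum.

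The base cases are immediate: $P^0=\{0\}$ is a one-point poset and $P^1=P$ is a well partial order with minimum $0$ by hypothesis. For the successor case $\sigma=\delta+1$, the heart of the matter is the fact that the lexicographic product of two well partial orders is again a well partial order, which I would state as a separate lemma. Given a sequence $(p_n,x_n)$ in $P\times P^\delta$, I first use that $P$ is a well partial order to pass to a subsequence along which the first coordinates are non-decreasing, $p_{n_0}\leq_P p_{n_1}\leq_P\cdots$; then the induction hypothesis that $P^\delta$ is a well partial order furnishes $k<l$ with $x_{n_k}\leq_{P^\delta}x_{n_l}$. Whether $p_{n_k}<_P p_{n_l}$ or $p_{n_k}=p_{n_l}$, the definition of the lexicographic order gives $(p_{n_k},x_{n_k})\leq_{P^\sigma}(p_{n_l},x_{n_l})$, so the original sequence is good. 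The element $(0,0)$ is the global minimum: it lies below an arbitrary $(p,x)$ through the first coordinate when $p\neq 0$ and through the second when $p=0$.

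For the limit case I would regard $P^\sigma$ as the directed union of the $P^\alpha$ with $\alpha<\sigma$, and assign to each $x$ its \emph{level} $\ell(x)$, the least $\alpha$ with $x\in P^\alpha$. Given a sequence $(x_n)$, I examine the ordinals $\ell(x_n)$. If some $i<j$ satisfies $\ell(x_i)<\ell(x_j)$, then the cross-level clause of Definition~\ref{defpow} yields $x_i\leq_{P^\sigma}x_j$ directly, so the sequence is good. Otherwise the sequence $\bigl(\ell(x_n)\bigr)_n$ is non-increasing, hence eventually constant equal to some $\alpha<\sigma$ because the ordinals are well-ordered; from that index onward the terms lie in $P^\alpha$ and are compared there by $\leq_{P^\alpha}$, so the induction hypothesis that $P^\alpha$ is a well partial order produces good indices in the tail, and therefore in the whole sequence. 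The minimum $0$ of $P^0$ is below every element by the same cross-level clause.

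I expect the successor step to be the main obstacle, since the preservation of the well partial order property under the lexicographic product is the one genuinely combinatorial point and cannot be obtained from a naive componentwise argument: the lexicographic order differs from the product order, so the non-decreasing-subsequence extraction must be used precisely to ensure that strictness in the first coordinate covers exactly the cases where the second-coordinate comparison might fail. The limit step, by contrast, is essentially bookkeeping once one notes that levels are ordinals and so cannot strictly decrease infinitely often, which is what collapses the analysis onto a single $P^\alpha$ covered by the induction hypothesis.
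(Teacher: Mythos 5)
Your proof is correct, and its overall skeleton matches the paper's: transfinite induction, the same identification of the minima ($(0,\min(P^\delta))$ at successors, $0$ itself at limits), and essentially the same limit step, where cross-level comparability forces any bad configuration into a single $P^\alpha$ covered by the induction hypothesis. Where you genuinely diverge is the successor step. The paper argues by contradiction directly on the two defining conditions: it supposes an infinite antichain in $P^{\sigma+1}$, splits into cases according to whether the first coordinates take infinitely many distinct values (yielding an infinite antichain in $P$, since distinct first coordinates of lex-incomparable pairs must themselves be incomparable) or only finitely many (so some fiber $\{p_k\}\times P^\sigma\cong P^\sigma$ contains an infinite antichain), and then handles infinite descending chains ``similarly'' in a separate, only sketched, pass. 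You instead run the classical sequence-theoretic argument: convert the hypothesis into the good-sequence characterization of well partial orders, extract a non-decreasing subsequence of first coordinates via the terminal-index lemma, and apply goodness of $P^\delta$ to the second coordinates, letting strictness in the first coordinate absorb exactly the cases where the second coordinates need not compare. Your route treats antichains and descending chains uniformly in one argument and makes the role of the lexicographic order transparent; its cost is the reliance on two imported (standard, but unproved here) facts, namely the Ramsey-type equivalence between ``no infinite antichain and no infinite descending chain'' and ``every infinite sequence is good'', and the monotone-subsequence extraction, whereas the paper's case analysis is elementary and self-contained. Both are valid; yours is arguably the tighter argument on the descending-chain side, which the paper dispatches with a single word.
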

\begin{proof}
    By transfinite induction. Cases $\sigma=0$ and $\sigma=1$ are trivial. Let us assume that \((P^\sigma; \leq_{P^\sigma})\) contains no infinite antichains but \((P^{\sigma+1}; \leq_{P^{\sigma+1}})\) contains an infinite antichain $A=\{(p_i,q_j)\mid i,j\in\omega \}$. Assume that there are infinitely many distinct \(p_i\) in \(A\), but then \(\{p_i \mid i \in \omega\} \subset P\) would be an infinite antichain. Thus, there must be only finitely many \(p_i\) in \(A\). We may fix one of them, say \(p_k\), such that \(A_k = \{(p_k, q_j) \mid (p_k, q_j) \in A\}\) is an infinite antichain. However, \(\{p_k\} \times P^\sigma\), with the order inherited from \((P^{\sigma+1}; \leq_{P^{\sigma+1}})\), is isomorphic to \((P^{\sigma}; \leq_{P^{\sigma}})\), so there cannot be infinitely many distinct \(q_j\) either. That is, \(A\) cannot be an infinite antichain. If \(\sigma\) is a limit ordinal and for no \(\alpha \in \sigma\) does \(P^\alpha\) contain infinite antichains, then any infinite antichain \(A \subseteq P^\sigma\) must contain elements \(p\) and \(q\) such that \(p \in P^\alpha\) and \(q \in P^\beta\) with \(\alpha \in \beta \in \sigma\). But then \(p\) and \(q\) would be comparable. Similarly, we can show that \(P^\sigma\) contains no infinite descending chains. Therefore, \(P^\sigma\) is a well partial order. The global minimum can also be obtained by transfinite induction: in \(P^{\sigma+1}\) the global minimum is \((0, \min(P^\sigma))\), and if \(\sigma\) is a limit ordinal, it is \(0\) itself.
\end{proof}

From now on we may study the classes of structures \(\{(P^\sigma; \leq_{P^\sigma}) \mid \sigma \in \mathrm{Ord}\}\) determined by a well partial order \((P; \leq_{P})\) with a global minimum. The following are fundamental observations:
\begin{lemma}\label{downtrans}
    Let \((P^\sigma; \leq_{P^\sigma})\) be a well partial order with a global minimum. If player $A$ has a winning strategy on $(P^\alpha;\leq_{P^\alpha})$, then $A$ also has a winning strategy on every poset $(P^\sigma;\leq_{P^\sigma})$ with $\sigma>\alpha$. Hence, if player $B$ has a winning strategy on $(P^\sigma;\leq_{P^\sigma})$, player $B$ also has a winning strategy on $(P^\alpha;\leq_{P^\alpha})$ for every $\alpha<\sigma$.

    If $\delta$ is a limit ordinal and $B$ has a winning strategy on $(P^\alpha;\leq_{P^\alpha})$ for unboundedly many $\alpha$ less than $\delta$, then $B$ has a winning strategy on $(P^\delta;\leq_{P^\delta})$.
\end{lemma}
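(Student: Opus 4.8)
The plan is to extract a single structural fact about the powers and then read both assertions off from it, using Zermelo's theorem (each game is finite by Proposition \ref{profini}, hence determined). First I would make explicit the \emph{level} decomposition behind Definition \ref{defpow}. Put $Q^\gamma:=P^{\gamma+1}\setminus(\{0\}\times P^\gamma)$. A routine transfinite induction shows that for every ordinal $\tau$ the set $P^\tau$ is partitioned by $\{0\}$ and the blocks $Q^\gamma$ with $\gamma<\tau$; that the initial segment $\{0\}\cup\bigcup_{\gamma<\beta}Q^\gamma$ is downward closed and order-isomorphic to $P^\beta$; and --- this is the clause of Definition \ref{defpow} that does the work --- that every element of a block $Q^\gamma$ lies strictly above every element of lower level. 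I then assign to each $z\in P^\tau\setminus\{0\}$ its \emph{level} $\ell(z)$, the unique $\gamma$ with $z\in Q^\gamma$.

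The engine is the following observation. If $\ell(z)=\gamma$ and $\tau>\gamma$, then playing $z$ on $P^\tau$ removes $z$ together with its up-set ${\uparrow}z=\{y:y\ge z\}$; every element of level $>\gamma$ already belongs to ${\uparrow}z$, while the part of ${\uparrow}z$ lying inside the initial segment isomorphic to $P^{\gamma+1}$ is computed within that segment. Hence the resulting position is exactly $P^{\gamma+1}\setminus{\uparrow}z$, \emph{independently of $\tau$}. Thus a fixed element $z$ produces the same remaining sub-poset, and therefore the same win/loss value, in every power $P^\tau$ with $\tau>\ell(z)$.

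Granting this, the first statement is immediate. If $A$ wins $P^\alpha$, choose a winning first move $t$; necessarily $t\neq 0$, so $\ell(t)<\alpha$, and the position $P^\alpha\setminus{\uparrow}t$ is a loss for the player to move. For any $\sigma>\alpha$ we have $\sigma>\ell(t)$, so by the observation the move $t$ leaves the identical position on $P^\sigma$; hence $t$ is a winning first move there and $A$ wins $P^\sigma$. Taking contrapositives and invoking determinacy of $P^\sigma$ and $P^\alpha$ gives precisely the dual form: a winning strategy for $B$ on $P^\sigma$ yields one on each $P^\alpha$ with $\alpha<\sigma$.

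For the limit statement, I would first upgrade the hypothesis: if $B$ wins $P^\alpha$ for unboundedly many $\alpha<\delta$, then by the dual form just proved $B$ wins $P^\beta$ for every $\beta<\delta$. I then specify $B$'s strategy on $P^\delta$. If $A$ opens with $0$ it loses at once; otherwise $A$ opens with some $x$, and setting $\gamma=\ell(x)<\delta$ we have $\gamma+1<\delta$ since $\delta$ is a limit, so $B$ has a winning second-player strategy on $P^{\gamma+1}$. By the observation the position $P^\delta\setminus{\uparrow}x$ equals $P^{\gamma+1}\setminus{\uparrow}x$, i.e.\ the position reached after the legal opening move $x$ in the game on $P^{\gamma+1}$, and all subsequent play remains inside the initial segment isomorphic to $P^{\gamma+1}$; so $B$ simply follows its $P^{\gamma+1}$-strategy and wins. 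The one genuinely load-bearing step is the reduction observation and the level bookkeeping it rests on --- establishing from Definition \ref{defpow} that higher levels sit entirely above lower ones --- and that is where I expect the care to be needed; the rest is determinacy together with bookkeeping.
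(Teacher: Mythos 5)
Your proposal is correct and takes essentially the same route as the paper: the paper proves the upward transfer exactly as in Proposition~\ref{propsen} (a winning first move in $P^\alpha$ leaves an identical remaining position when played in $P^\sigma$), obtains the dual statement for $B$ from Zermelo's theorem, and settles the limit case by observing that $A$'s first move must lie in some $P^{\gamma}$ with $\gamma<\delta$, where $B$ already wins. Your explicit level decomposition and ``engine'' observation are just a careful spelling-out of the structural fact the paper uses implicitly.
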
\begin{proof}
    The first statement is proved as in \hyperref[propsen]{Proposition \ref{propsen}}. The consequence is immediate and follows from Zermelo's theorem.
    
    For the second part, note that, then, $B$ has a winning strategy on $(P^\alpha; \leq_{P^\alpha})$ for every $\alpha < \delta$. Player $A$'s first move must necessarily be some $p\in P^\gamma$ for some $\gamma < \delta$, so it will be a move in $P^{\gamma}$, where $B$ has a winning strategy.
\end{proof}

We can exploit this idea to work with families of classes of posets:
\begin{proposition}
Let $S$ be a set of well partial orders with global minima. There exists an ordinal $\xi$ such that, for every $(P;\leq_P)\in S $, player $B$ has a winning strategy on $(P^\sigma;\leq_{P^\sigma})$ for every ordinal $\sigma$ iff $B$ has a winning strategy on $(P^\xi;\leq_{P^\xi})$.
\end{proposition}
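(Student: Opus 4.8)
The plan is to reduce everything to a single supremum over a set of ordinals, using the monotonicity supplied by Lemma \ref{downtrans}. For each $(P;\leq_P)\in S$, I would first record the (class) of powers on which the second player wins,
\[
W_P=\{\sigma\in\mathrm{Ord}\mid B \text{ has a winning strategy for Chomp on } (P^\sigma;\leq_{P^\sigma})\}.
\]
The first part of Lemma \ref{downtrans} says precisely that $W_P$ is downward closed: if $\sigma\in W_P$ and $\alpha<\sigma$, then $\alpha\in W_P$. Hence $W_P$ is an initial segment of the ordinals, so either $W_P=\mathrm{Ord}$, or there is a least ordinal $\lambda_P\notin W_P$ and then $W_P=\{\sigma\mid\sigma<\lambda_P\}$.

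Next I would separate out the posets for which $B$ does \emph{not} win on every power, namely $S_1=\{P\in S\mid W_P\neq\mathrm{Ord}\}$, and set
\[
\xi=\sup\{\lambda_P\mid P\in S_1\},
\]
with the convention $\sup\emptyset=0$. The one point that genuinely requires care — and the only place where the hypothesis that $S$ is a \emph{set} enters — is that $\{\lambda_P\mid P\in S_1\}$ is a set of ordinals (it is the image of $S_1$ under the assignment $P\mapsto\lambda_P$, so it is a set by Replacement), and therefore its supremum exists. This is exactly what keeps the construction from running into a proper-class obstruction, and I expect it to be the conceptual heart of the argument even though the verification is short.

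Finally I would check the biconditional with this $\xi$, fixing an arbitrary $(P;\leq_P)\in S$. One direction is immediate: if $B$ wins on $(P^\sigma;\leq_{P^\sigma})$ for every $\sigma$, then in particular $B$ wins on $(P^\xi;\leq_{P^\xi})$. For the converse, suppose $B$ wins on $(P^\xi;\leq_{P^\xi})$, i.e. $\xi\in W_P$, and assume toward a contradiction that $P\in S_1$. Then $W_P=\{\sigma\mid\sigma<\lambda_P\}$, so $\xi\in W_P$ forces $\xi<\lambda_P$; but $\xi=\sup\{\lambda_{P'}\mid P'\in S_1\}\geq\lambda_P$, a contradiction. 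Hence $P\notin S_1$, i.e. $W_P=\mathrm{Ord}$, and $B$ wins on every power. I do not anticipate any serious difficulty here: the substantive ingredients are just the downward-closure from Lemma \ref{downtrans} together with the set-hood of $\{\lambda_P\}$. Note that the second part of Lemma \ref{downtrans} (which would upgrade each $\lambda_P$ to a successor ordinal) is not actually needed for this proposition; it is, however, what later allows $\xi$ to be written as a genuine successor $\mathrm{ch}(\Gamma)=\xi+1$.
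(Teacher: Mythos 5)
Your proof is correct and is essentially the paper's own argument: the paper also takes $\xi$ to be the union of the least ``failure'' ordinals $\alpha_P$ over those $P\in S$ admitting one (it phrases $\alpha_P$ as the least power where $A$ wins, which by Zermelo's theorem coincides with your $\lambda_P$), and then invokes the monotonicity of Lemma \ref{downtrans} exactly as you do. Your write-up is merely more explicit about Replacement and the downward-closure bookkeeping.
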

\begin{proof}
Let us define $$W=\{(P;\leq_P)\in S \mid A\text{ has a winning strategy on $(P^\sigma;\leq_{P^\sigma})$ for some $\sigma$} \}.$$Take $\xi=\bigcup_{(P;\leq_P)\in W} \alpha_P$, where $\alpha_P$ denotes the least ordinal $\gamma$ such that $A$ has a winning strategy on $(P^{\gamma};\leq_{P^\gamma})$. Clearly, if $B$ has a winning strategy on $(P^{\xi};\leq_{P^\xi}) $, then $ (P;\leq_{P})\not\in W$, i.e., $B$ has a winning strategy on $(P^\sigma;\leq_{P^\sigma})$ for every $\sigma$.
\end{proof}For example, \(S\) could be the set of all such posets with cardinality less than or equal to a fixed cardinal \(\kappa\). 

The argument in the preceding proposition, as well as phenomena such as the one described in \hyperref[downtrans]{Lemma \ref{downtrans}}, are well known in model theory. In that context, the ordinal $\xi$ would correspond to a ``Hanf number'' for the property ``$B$ has a winning strategy'' (see \cite[Ch. 4]{Baldwin2009}).
\begin{notation}
    \hyperref[downtrans]{Lemma \ref{downtrans}} shows that if $B$ does not have a winning strategy on all the posets $(P^\sigma;\leq_{P^\sigma})$, there must be an ordinal $\xi$ such that $B$ has a winning strategy on $(P^\xi;\leq_{P^\xi})$ but not on $(P^{\gamma };\leq_{P^{\gamma}})$ for all $\gamma\geq\xi+1$. We let $\textnormal{ch}(P;\leq_{P})$ denote $\xi+1$ and, for a set $S$ of posets, $\text{ch}(S)=\bigcup_{(P;\leq_{P})\in S}\textnormal{ch}(P;\leq_{P})$.
\end{notation}
\subsection{Algebraic approach} We shall see in this section that the posets given by \hyperref[lemposetstr]{Lemma \ref{lemposetstr}} fit perfectly into the classes of posets under consideration. We first require a simple preliminary lemma.
\begin{lemma}
    Let $\Gamma\not\subset\omega$ be a set of ordinals, and define\[\varepsilon :=\begin{cases}\left(\bigcup_{\gamma\in\Gamma}\overline{e}(\overline{e}(\gamma))\right) + 1, & \text{if } \bigcup_{\gamma\in\Gamma}\overline{e}(\overline{e}(\gamma)) \in \{\overline{e}(\overline{e}(\gamma)) \mid \gamma \in \Gamma\}, \\\bigcup_{\gamma\in\Gamma}\overline{e}(\overline{e}(\gamma)), & \text{otherwise}.\end{cases}\] Then, every ordinal of the form \(\omega^{\alpha}\), with $\alpha=\omega^{\epsilon_1} e_1 + \cdots + \omega^{\epsilon_n} e_n + \omega^{\varepsilon} e$ (in Cantor normal form), is closed under $\otimes$ and $\oplus$ with elements of $\Gamma$; that is,\[\alpha_1 \otimes \gamma_1 \oplus \cdots \oplus \alpha_k \otimes \gamma_k \in \omega^\alpha\]for arbitrary $\alpha_1, \dots, \alpha_k \in \omega^\alpha$ and $\gamma_1, \dots, \gamma_k \in \Gamma$. Ordinals of this form will be called \emph{$\Gamma$-closed}. Moreover, if $\omega^\alpha$ is $\Gamma$-closed, then every element $\beta\in\langle \Gamma \rangle^\alpha$ can be written as \[ \beta = \omega^{\alpha_1} \otimes \beta_1 + \cdots + \omega^{\alpha_m} \otimes \beta_m + \beta_{m+1}, \] where the $\alpha_i$'s are $\Gamma$-closed, $\alpha>\alpha_1 > \cdots > \alpha_m$, and $\beta_1, \dots, \beta_{m+1} \in \langle \Gamma \rangle^{\omega^\varepsilon}$.
    
    If $\Gamma\subset\omega$, any ordinal of the form $\omega^\alpha$ is $\Gamma$-closed and the corresponding decomposition was written in (\ref{ecdes}).
\end{lemma}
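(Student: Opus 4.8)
The plan is to treat the two assertions separately, reducing each to a purely arithmetical fact about the threshold $\varepsilon$. For the closure statement, note first that the $\oplus$-part is free: by \hyperref[lemamon]{Lemma \ref{lemamon}}, $\omega^\alpha$ is closed under natural sum for \emph{every} $\alpha$, so it suffices to show that a single product $\zeta\otimes\gamma$ with $\zeta\in\omega^\alpha$ and $\gamma\in\Gamma$ lies again in $\omega^\alpha$, i.e. that $\overline{e}(\zeta\otimes\gamma)<\alpha$. From the definition of $\otimes$ and the monotonicity of $\oplus$ one reads off that the leading exponent of a product is the natural sum of the leading exponents, $\overline{e}(\zeta\otimes\gamma)=\overline{e}(\zeta)\oplus\overline{e}(\gamma)$. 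Since $\zeta\in\omega^\alpha$ gives $\overline{e}(\zeta)<\alpha$, and since the definition of $\varepsilon$ forces $\overline{e}(\overline{e}(\gamma))<\varepsilon$ and hence $\overline{e}(\gamma)<\omega^\varepsilon$, the whole first part reduces to the claim: if every exponent in the Cantor normal form of $\alpha$ is $\geq\varepsilon$, then $\mu<\alpha$ and $\nu<\omega^\varepsilon$ imply $\mu\oplus\nu<\alpha$.

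I would prove this by splitting each ordinal at the threshold $\varepsilon$ into a \emph{head} (terms of CNF-exponent $\geq\varepsilon$) and a \emph{tail} (terms of exponent $<\varepsilon$). The tail is always $<\omega^\varepsilon$, and the hypothesis on $\alpha$ says precisely that $\alpha$ is all head; concretely $\alpha=\omega^\varepsilon\cdot\kappa$ for some $\kappa$ (ordinary product), since $\omega^{\mu_i}=\omega^\varepsilon\cdot\omega^{\mu_i-\varepsilon}$ whenever $\mu_i\geq\varepsilon$. Writing $\mu=\mu^{\geq}+\mu^{<}$, the tail $\nu$ interacts only with $\mu^{<}$, so $\mu\oplus\nu=\mu^{\geq}+(\mu^{<}\oplus\nu)$ with $\mu^{<}\oplus\nu<\omega^\varepsilon$. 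Now $\mu^{\geq}=\omega^\varepsilon\lambda$ with $\lambda<\kappa$ (because $\mu^{\geq}\leq\mu<\alpha$), hence $\mu^{\geq}+\omega^\varepsilon=\omega^\varepsilon(\lambda+1)\leq\omega^\varepsilon\kappa=\alpha$, giving $\mu\oplus\nu<\alpha$ and finishing the first part.

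For the decomposition, the subtle point is that the tail factors $\beta_j$ must land in the \emph{generated} submonoid $\langle\Gamma\rangle^{\omega^\varepsilon}$, not merely in $\omega^{\omega^\varepsilon}$; so I would argue at the level of generators and then propagate along $\oplus$. First, $\omega^{\omega^\varepsilon}$ is itself $\Gamma$-closed (the exponent $\omega^\varepsilon$ has minimal CNF-exponent $\varepsilon$), so $\langle\Gamma\rangle^{\omega^\varepsilon}$ is a well-defined submonoid containing $\Gamma$. Take a generator $\zeta\otimes\gamma$ with $\zeta\in\omega^\alpha$, $\gamma\in\Gamma$, and expand it as $\bigoplus_{i,j}\omega^{\zeta_i\oplus g_j}z_ih_j$. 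Because every exponent $g_j$ of $\gamma$ satisfies $g_j<\omega^\varepsilon$, the head of each $\zeta_i\oplus g_j$ equals the head of $\zeta_i$; grouping by the distinct heads $\alpha_1>\cdots>\alpha_m$ of the $\zeta_i$ and using $\omega^{\alpha_l}\otimes\eta=\omega^{\alpha_l}\cdot\eta$ for $\eta<\omega^{\omega^\varepsilon}$ (valid since $\alpha_l\oplus\tau=\alpha_l+\tau$ when $\tau<\omega^\varepsilon$), the block for head $\alpha_l$ becomes $\omega^{\alpha_l}\otimes(\zeta^{(l)}\otimes\gamma)$, where $\zeta^{(l)}:=\bigoplus_{(\zeta_i)^{\geq}=\alpha_l}\omega^{(\zeta_i)^{<}}z_i<\omega^{\omega^\varepsilon}$. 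Each factor $\zeta^{(l)}\otimes\gamma$ lies in $\langle\Gamma\rangle^{\omega^\varepsilon}$, so every generator already has the desired form.

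Finally I would check that the class of ordinals admitting such a decomposition is closed under $\oplus$: regrouping by head and using the distributive law $\omega^{\alpha}\otimes\beta\oplus\omega^{\alpha}\otimes\beta'=\omega^{\alpha}\otimes(\beta\oplus\beta')$ of \hyperref[lemamon]{Lemma \ref{lemamon}}, together with closure of $\langle\Gamma\rangle^{\omega^\varepsilon}$ under $\oplus$, shows common-head blocks combine inside the submonoid, while distinct heads occupy disjoint, ordered exponent ranges (so $\oplus$ and $+$ agree there). Since $\langle\Gamma\rangle^\alpha$ is generated under $\oplus$ by the products $\zeta\otimes\gamma$, this yields the decomposition for all $\beta$. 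The case $\Gamma\subset\omega$ is the degenerate instance $\overline{e}(\gamma)=0$: then $\overline{e}(\zeta\otimes\gamma)=\overline{e}(\zeta)$, closure holds for every $\alpha$, $\omega^\varepsilon$ collapses to $1$, and the decomposition is exactly (\ref{ecdes}) with tail factors in $\langle\Gamma\rangle\subset\omega$. The main obstacle is precisely the generator-level bookkeeping of the second part: keeping the tail factors inside the generated submonoid rather than the ambient $\omega^{\omega^\varepsilon}$.
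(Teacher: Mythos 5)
Your proof is correct, and it is organized genuinely differently from the paper's, which is far terser. For the closure part the paper simply invokes \hyperref[lemamon]{Lemma \ref{lemamon}} together with the observation that $\omega^{\omega^\varepsilon}$ is the least ordinal closed under $\oplus$ and $\otimes$ containing $\Gamma$; you instead isolate and prove the arithmetic facts doing the real work, namely $\overline{e}(\zeta\otimes\gamma)=\overline{e}(\zeta)\oplus\overline{e}(\gamma)$ and the implication that $\mu<\alpha$ and $\nu<\omega^{\varepsilon}$ give $\mu\oplus\nu<\alpha$ whenever all CNF exponents of $\alpha$ are at least $\varepsilon$, proved via the representation $\alpha=\omega^{\varepsilon}\cdot\kappa$. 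For the decomposition the paper argues top-down: factor out of $\beta$ the largest possible $\Gamma$-closed terms $\omega^{\alpha_i}$, asserting without further detail that the resulting coefficients lie in $\langle\Gamma\rangle^{\omega^\varepsilon}$ and that $\oplus$ may be replaced by $+$. You argue bottom-up: you first obtain the decomposition for a single generator $\zeta\otimes\gamma$ by splitting every exponent at the threshold $\varepsilon$ and regrouping (using $\alpha_l\oplus\tau=\alpha_l+\tau$ for $\tau<\omega^{\varepsilon}$), so that the coefficient of each head is visibly of the form $\zeta^{(l)}\otimes\gamma\in\langle\Gamma\rangle^{\omega^\varepsilon}$, and you then check that decomposability is preserved under $\oplus$ via distributivity and the fact that distinct heads occupy disjoint, ordered exponent intervals. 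This is exactly the bookkeeping hiding behind the paper's one-line membership claim, so your version is more rigorous precisely where the paper is most compressed, at the cost of length. A further point in your favor: your closure argument applies to any $\alpha$ all of whose CNF exponents are $\geq\varepsilon$ (i.e.\ allowing the final coefficient $e$ to be $0$), which is the reading actually needed, since the heads $\zeta_i^{\geq}$ arising in the decomposition --- and likewise the paper's ``largest possible'' factored terms --- need not have smallest exponent exactly $\varepsilon$.
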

\begin{proof}
The first part follows from \hyperref[lemamon]{Lemma \ref{lemamon}}; in particular, note that $\omega^{\omega^\varepsilon}$ is the least ordinal closed under $\oplus$ and $\otimes$ that contains $\Gamma$.

For the second part, we may factor out in $\beta$ common terms of the form $\omega^{\alpha_i}$ that are $\Gamma$-closed and chosen to be as large as possible. In this way, the coefficients multiplying each $\omega^{\alpha_i}$ are elements of $\langle \Gamma \rangle^{\omega^\varepsilon}$. Finally, we may replace $\oplus$ by $+$, since in Cantor normal form the largest term of $\omega^{\alpha_{i+1}} \otimes \beta_{i+1}$ has an exponent smaller than the exponent of the smallest term of $\omega^{\alpha_i} \otimes \beta_i$.

And the case $\Gamma \subset \omega$ had already been considered.
\end{proof}
\begin{proposition}
    Let $\Gamma $ be a set of ordinals, let $\varepsilon$ be defined as in the previous lemma, and denote \((P;\leq_P) = (\mathcal{S}^{\omega^\varepsilon}; \leq_{\mathcal{S}^{\omega^\varepsilon}}).\) Then, for every poset $(P^\sigma; \leq_{P^\sigma})$ there exists a $\Gamma$-closed ordinal $\omega^{\sigma'}$ such that \((P^\sigma; \leq_{P^\sigma})\cong(\mathcal{S}^{\sigma'}; \leq_{\mathcal{S}^{\sigma'}}).\)
\end{proposition}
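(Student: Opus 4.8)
The plan is to show that the assignment $\sigma \mapsto \sigma' := \omega^{\varepsilon}\cdot\sigma$ works; that is, to prove by transfinite induction that $(P^\sigma;\leq_{P^\sigma}) \cong (\mathcal{S}^{\omega^{\varepsilon}\cdot\sigma};\leq_{\mathcal{S}^{\omega^{\varepsilon}\cdot\sigma}})$ for every $\sigma$. First I would record that $\omega^{\omega^{\varepsilon}\cdot\sigma}$ is indeed $\Gamma$-closed: by the characterization in the preceding lemma, $\omega^{\alpha}$ is $\Gamma$-closed exactly when every exponent in the Cantor normal form of $\alpha$ is $\geq\varepsilon$, i.e.\ when $\alpha$ is a multiple of $\omega^{\varepsilon}$, and $\omega^{\varepsilon}\cdot\sigma$ is such a multiple. (When $\Gamma\subset\omega$ one takes $\varepsilon=0$, so that $\sigma'=\sigma$ and $P=\mathcal{S}^{1}=\langle\Gamma\rangle^{1}$, recovering the numerical-semigroup case.) The cases $\sigma=0$ and $\sigma=1$ are immediate from the definitions.

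For the successor step $\sigma=\delta+1$, the crucial tool is the decomposition supplied by the preceding lemma applied to $\mathcal{S}^{\omega^{\varepsilon}(\delta+1)}$. Since $\omega^{\varepsilon}\delta$ is the largest $\Gamma$-closed ordinal below $\omega^{\varepsilon}(\delta+1)$, every $\beta\in\mathcal{S}^{\omega^{\varepsilon}(\delta+1)}$ has a unique expression $\beta=\omega^{\omega^{\varepsilon}\delta}\otimes\beta_{1}+\beta'$ with $\beta_{1}\in P=\langle\Gamma\rangle^{\omega^{\varepsilon}}$ (possibly $0$) and $\beta'\in\mathcal{S}^{\omega^{\varepsilon}\delta}$. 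A short computation with Cantor normal forms shows that here the natural product coincides with the ordinary one, $\omega^{\omega^{\varepsilon}\delta}\otimes\beta_{1}=\omega^{\omega^{\varepsilon}\delta}\cdot\beta_{1}$, because the exponents occurring in $\beta_{1}$ all lie below $\omega^{\varepsilon}$ whereas those of $\omega^{\varepsilon}\delta$ all lie at level $\geq\varepsilon$; thus $\beta\mapsto(\beta_{1},\beta')$ is precisely ordinal quotient–remainder division by $\omega^{\omega^{\varepsilon}\delta}$. Composing with the inductive isomorphism $\mathcal{S}^{\omega^{\varepsilon}\delta}\cong P^{\delta}$ gives a bijection $\mathcal{S}^{\omega^{\varepsilon}(\delta+1)}\to P\times P^{\delta}=P^{\delta+1}$; its well-definedness and surjectivity use that $\omega^{\omega^{\varepsilon}\delta}\otimes\beta_{1}\in\mathcal{S}^{\omega^{\varepsilon}(\delta+1)}$ whenever $\beta_{1}\in P$, which follows by distributing $\otimes$ over a generating expression of $\beta_{1}$ and invoking \hyperref[lemamon]{Lemma \ref{lemamon}}.

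The heart of the proof — and the step I expect to be the main obstacle — is verifying that this bijection is an order isomorphism for the order of \hyperref[lemposetstr]{Lemma \ref{lemposetstr}}, i.e.\ relating the condition ``$\hat\beta-\beta\in\mathcal{S}^{\omega^{\varepsilon}(\delta+1)}$'' to the lexicographic condition on $P\times P^{\delta}$. Writing $\beta=\omega^{\omega^{\varepsilon}\delta}\cdot\beta_{1}+\beta'$ and $\hat\beta=\omega^{\omega^{\varepsilon}\delta}\cdot\hat\beta_{1}+\hat\beta'$, I would split into two cases. When $\beta_{1}=\hat\beta_{1}$ the common high part cancels, so $\hat\beta-\beta=\hat\beta'-\beta'$, and using $\mathcal{S}^{\omega^{\varepsilon}(\delta+1)}\cap\omega^{\omega^{\varepsilon}\delta}=\mathcal{S}^{\omega^{\varepsilon}\delta}$ this reduces to $\beta'\leq_{\mathcal{S}^{\omega^{\varepsilon}\delta}}\hat\beta'$, matching the tie-break coordinate. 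When the high digits differ, the ordinal comparison forces $\beta_{1}<\hat\beta_{1}$ (the high part dominates), and an absorption computation gives $\hat\beta-\beta=\omega^{\omega^{\varepsilon}\delta}\cdot(\hat\beta_{1}-\beta_{1})+\hat\beta'$; by uniqueness of the $\Gamma$-closed decomposition this lies in $\mathcal{S}^{\omega^{\varepsilon}(\delta+1)}$ precisely when $\hat\beta_{1}-\beta_{1}\in P$, i.e.\ precisely when $\beta_{1}<_{P}\hat\beta_{1}$, which is exactly the lexicographic condition for distinct first coordinates. This establishes the successor-stage isomorphism.

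Finally, for a limit ordinal $\sigma$ I would note that $\mathcal{S}^{\omega^{\varepsilon}\sigma}=\bigcup_{\alpha<\sigma}\mathcal{S}^{\omega^{\varepsilon}(\alpha+1)}$ is a directed union whose induced orders restrict compatibly (again via $\mathcal{S}^{\omega^{\varepsilon}\sigma}\cap\omega^{\omega^{\varepsilon}(\alpha+1)}=\mathcal{S}^{\omega^{\varepsilon}(\alpha+1)}$), and that under the successor isomorphisms the subset $\{0\}\times P^{\alpha}$ removed in \hyperref[defpow]{Definition \ref{defpow}} corresponds exactly to the elements of $\mathcal{S}^{\omega^{\varepsilon}(\alpha+1)}$ with vanishing top digit, namely $\mathcal{S}^{\omega^{\varepsilon}\alpha}$. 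Hence the successor isomorphisms glue to an order isomorphism $P^{\sigma}=\bigcup_{\alpha<\sigma}\bigl(P^{\alpha+1}\setminus(\{0\}\times P^{\alpha})\bigr)\cong\mathcal{S}^{\omega^{\varepsilon}\sigma}$, completing the induction. The only genuinely technical points are the two ordinal-subtraction identities in the successor step and the fact that membership in $\mathcal{S}$ is detected digitwise; everything else is routine bookkeeping.
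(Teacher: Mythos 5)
Your proposal is correct and takes essentially the same route as the paper: the paper's proof is the same transfinite induction, defining $\sigma'$ recursively by $(\sigma+1)'=\sigma'+\omega^{\varepsilon}$ with suprema at limits (exactly your closed form $\sigma'=\omega^{\varepsilon}\cdot\sigma$), using at successor stages the map $f^{\sigma+1}\bigl((\alpha,\beta)\bigr)=\omega^{\sigma'}\otimes\alpha+f^{\sigma}(\beta)$ --- the inverse of your quotient--remainder decomposition --- and gluing restrictions at limits, so your write-up merely supplies the order-isomorphism and digitwise-membership checks that the paper asserts without proof. The only caveat, shared by the paper's own proof, is that for limit $\sigma$ the ordinal $\omega^{\varepsilon}\cdot\sigma$ is a multiple of $\omega^{\varepsilon}$ whose Cantor normal form contains no $\omega^{\varepsilon}$-term, so $\omega^{\omega^{\varepsilon}\cdot\sigma}$ is not literally of the syntactic form the preceding lemma names ``$\Gamma$-closed'', although it is genuinely closed under $\oplus$ and $\otimes$ with elements of $\Gamma$, which is all that either argument actually uses.
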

\begin{proof}
    By transfinite induction. If \((P^\sigma; \leq_{P^\sigma})\cong(\mathcal{S}^{\sigma'}; \leq_{\mathcal{S}^{\sigma'}})\) and $f^\sigma$ is an isomorphism, the poset $(P^{\sigma+1}; \leq_{P^{\sigma+1}})$ is isomorphic to $(\mathcal{S}^{\sigma'+\omega^\varepsilon}; \leq_{\mathcal{S}^{\sigma'+\omega^\varepsilon}})$ and the function $f^{\sigma+1}$ given by $f^{\sigma+1}\left((\alpha,\beta)\right)=\omega^{\sigma'}\otimes\alpha+f^\sigma(\beta)$ is an isomorphism. If $\sigma$ is a limit ordinal, take $\sigma'=\bigcup_{\alpha\in\sigma}\alpha'$ and $f^{\sigma}=\bigcup_{\alpha\in\sigma}f^{\alpha+1}\upharpoonright(P^{\alpha+1}\backslash\{0\}\times P^\alpha)$ is an isomorphism. And, in particular, if $\Gamma\subset\omega$,  then $\sigma'=\sigma$ and we can define isomorphisms similarly.
\end{proof}

Nevertheless, observe that, in this context, nothing prevents us from considering a broader class of posets. Instead of studying the class of powers of a poset given by \hyperref[defpow]{Definition \ref{defpow}}, using an appropriate set $\Gamma$ of ordinals (see \hyperref[defwellor]{Definition \ref{defwellor}}), we may work with the corresponding class \( \{(\mathcal{S}^\sigma;\leq_{\mathcal{S}^\sigma}) \mid \Gamma \subset \omega^\sigma\},\) which is more general. It is straightforward to verify that \hyperref[downtrans]{Lemma \ref{downtrans}} also holds for this class of posets.
\begin{notation}
    Given an appropriate set $\Gamma$ of ordinals, if $B$ has not the winning strategy on all the posets $(\mathcal{S}^\sigma;\leq_{\mathcal{S}^\sigma})$, we let $\textnormal{ch}(\Gamma)$ denote the ordinal $\xi+1$ such that player $B $ has the winning strategy on $(\mathcal{S}^\xi;\leq_{\mathcal{S}^\xi})$ but not on $(\mathcal{S}^\gamma;\leq_{\mathcal{S}^\gamma}) $ for all $\gamma\geq\xi+1$.
\end{notation}
\subsection{Well partial orders} In this section we address a more practical question: the characterization of those sets of ordinals that give rise to well partial orders. Arguments such as those presented in \hyperref[profini]{Proposition \ref{profini}} show that, if $\Gamma \subset \omega$, then any poset $(\mathcal{S}^\sigma; \leq_{\mathcal{S}^\sigma})$ is a well partial order and, therefore, the game of Chomp can be played on these posets.
\begin{figure}[h!]
\def\svgwidth{0.87\columnwidth}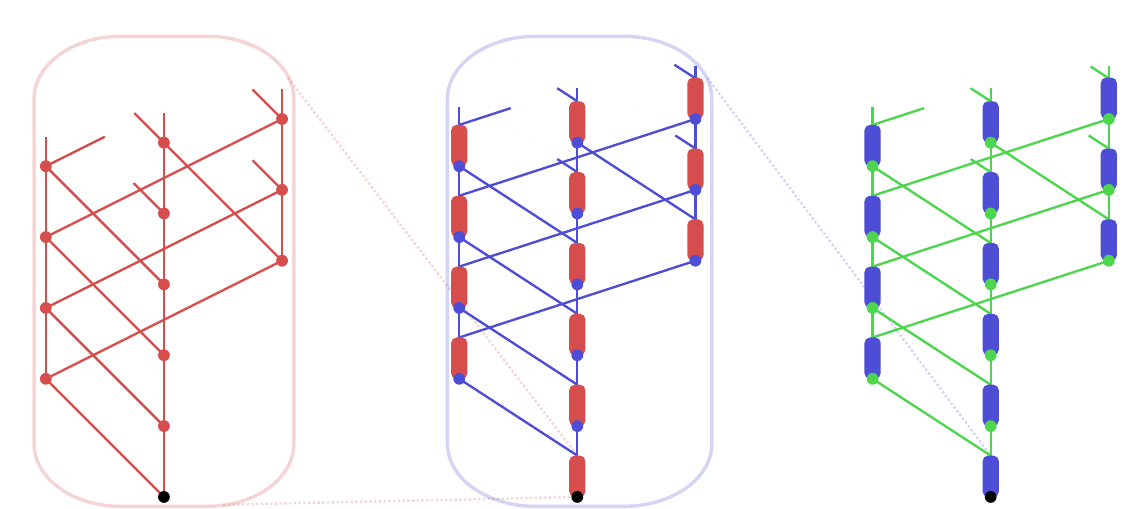    \caption{Posets $(\mathcal{S}^\sigma;\leq_{\mathcal{S^\sigma}})$ with $\mathcal{S}^\sigma=\langle3,5\rangle^\sigma$ and $\sigma\in\{1,2,3\}$.}\label{fig1}
\end{figure}

This, however, fails in general once we move to more general sets of generators:
\begin{example}
    The sets $$\Gamma_1=\{\omega+1\},\qquad \Gamma_2=\{\omega^n+1 \mid n\in\omega\backslash\{0\} \},\qquad \Gamma_3=\{2,3, \omega^2+\omega+1 \} $$generate the following infinite antichains $$A_1=\{\omega\cdot n+n \mid n\in\omega\backslash\{0\} \},\qquad A_2=\Gamma_2,\qquad A_3=\{\omega^2+\omega\cdot(1+2n)+1 \mid n\in\omega\}. $$
\end{example}
\begin{definition}\label{defwellor}
    A set $\Gamma$ of ordinals will be called a \textit{well partial order generator} ($\mathsf{wpog}$ for short) if the poset structure on $\langle \Gamma\rangle^\sigma$ is a well partial order for every $\sigma$ such that $\Gamma\subset\omega^\sigma$.
\end{definition}
The following proposition gives us some information on how these sets can be. In particular, we obtain that in any finite $\mathsf{wpog}$ the smallest generators must be of the form $\omega^\gamma p_1,\dots,\omega^{\gamma}p_n$.
\begin{proposition}
Let $\Gamma$ be a finite $\mathsf{wpog}$. If $\alpha_1, \dots, \alpha_m$ are all the elements of $\Gamma$ of the form
\(
\alpha_i = \omega^{\gamma_1} a_{i1} + \cdots + \omega^{\gamma_n} a_{in}
\)
with $a_{i1} \neq 0$, and they are the smallest elements of $\Gamma$, then $a_{ij} = 0$ for all $i$ and $j \neq 1$.
\end{proposition}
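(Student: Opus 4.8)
The plan is to argue by contradiction: assuming some $\alpha_i$ is not of the homogeneous form $\omega^{\gamma_1}a_{i1}$, I will manufacture an infinite antichain inside some $\langle\Gamma\rangle^\sigma$, contradicting that $\Gamma$ is a $\mathsf{wpog}$. The first step is to exploit the hypothesis that the $\alpha_i$ are the \emph{smallest} elements of $\Gamma$. Any generator whose leading exponent were strictly below $\gamma_1$ would be ordinally, hence $\leq_{\mathcal{S}^\sigma}$-, smaller than every $\alpha_i$; since no such element is allowed among the smallest elements, every generator of $\Gamma$ has leading exponent $\geq\gamma_1$, and the $\alpha_i$ are precisely those attaining $\gamma_1$. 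This is the decisive structural consequence: below $\omega^{\gamma_1+1}$ the monoid is controlled entirely by the $\alpha_i$.

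Next, supposing some $\alpha_i$ has a nonzero lower coefficient, I would single out the generator and tail exponent realizing the largest ratio: choose $(i_0,\ell)$ maximizing $a_{it}/a_{i1}$ over all $i$ and all $t\geq 2$ with $a_{it}\neq 0$, and set $\alpha=\alpha_{i_0}$ with $a_t:=a_{i_0 t}$, so that $a_1\neq 0$, $a_\ell\neq 0$, and $a_{i\ell}/a_{i1}\leq a_\ell/a_1$ for every $i$. The candidate antichain is $\{\alpha^{\oplus k}\mid k\geq 1\}$, where $\alpha^{\oplus k}=\omega^{\gamma_1}(ka_1)+\cdots+\omega^{\gamma_n}(ka_n)$. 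For $1\leq k<k'$ the ordinal difference is $\delta:=\alpha^{\oplus k'}-\alpha^{\oplus k}=\omega^{\gamma_1}\!\big((k'-k)a_1\big)+\omega^{\gamma_2}(k'a_2)+\cdots+\omega^{\gamma_n}(k'a_n)$, because the lower part of $\alpha^{\oplus k}$ is absorbed as soon as a term of exponent $\gamma_1$ is added on the right; note the asymmetry that the head coefficient carries $k'-k$ while every tail coefficient carries the full $k'$.

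The heart of the argument is to show $\delta\notin\langle\Gamma\rangle^\sigma$, so that $\alpha^{\oplus k}$ and $\alpha^{\oplus k'}$ are incomparable. Since $\delta<\omega^{\gamma_1+1}$, every summand of any representation $\delta=\bigoplus_j\lambda_j\otimes g_j$ satisfies $\lambda_j\otimes g_j\leq\delta<\omega^{\gamma_1+1}$; using $\overline{e}(\lambda\otimes g)=\overline{e}(\lambda)\oplus\overline{e}(g)$ together with the fact that every generator has leading exponent $\geq\gamma_1$, this forces each $g_j$ to be one of the $\alpha_i$ and each $\lambda_j$ to be finite. Hence $\delta=\bigoplus_i\alpha_i^{\oplus c_i}$ for some $c_i\in\omega$, and matching the $\omega^{\gamma_1}$- and $\omega^{\gamma_\ell}$-coefficients (by uniqueness of Cantor normal form) gives $\sum_i c_i a_{i1}=(k'-k)a_1$ and $\sum_i c_i a_{i\ell}=k'a_\ell$. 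The ratio bound then yields $\sum_i c_i a_{i\ell}\leq (a_\ell/a_1)\sum_i c_i a_{i1}=(k'-k)a_\ell<k'a_\ell$, a contradiction. Thus $\delta\notin\langle\Gamma\rangle^\sigma$ for all $k<k'$, the set $\{\alpha^{\oplus k}\}$ is an infinite antichain, and $\Gamma$ is not a $\mathsf{wpog}$.

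I expect the main obstacle to be the reduction in the previous paragraph, namely proving that a ``small'' element $\delta<\omega^{\gamma_1+1}$ of the monoid can only be a finite $\oplus$-combination of the bottom generators $\alpha_i$. This is exactly where the hypothesis that the $\alpha_i$ are the smallest generators is indispensable: a generator of smaller leading exponent could repair the mismatched proportion between head and tail coefficients (for instance $\{\omega,\omega+1\}$ is not a $\mathsf{wpog}$, yet adjoining $1$ makes $\{1,\omega,\omega+1\}$ into one), after which no antichain of this shape survives. Making the interaction of $\oplus$ and $\otimes$ with Cantor normal form fully rigorous is the delicate point; once it is in place, the concluding counting argument is entirely elementary.
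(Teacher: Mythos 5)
Your proof is correct and follows essentially the same route as the paper: both build the candidate antichain from the natural multiples $\{\alpha\otimes k\}$ of a generator chosen to maximize a tail-to-head coefficient ratio, compute the asymmetric ordinal difference $\omega^{\gamma_1}\bigl((k'-k)a_1\bigr)+\omega^{\gamma_2}(k'a_2)+\cdots$, and use the ``smallest generators'' hypothesis to force any representation of that difference to be a finite $\oplus$-combination $\bigoplus_i c_i\otimes\alpha_i$ of the bottom generators, yielding contradictory coefficient equations. The only real difference is the endgame, where your single summed inequality $\sum_i c_i a_{i\ell}\leq(a_\ell/a_1)\sum_i c_i a_{i1}=(k'-k)a_\ell<k'a_\ell$ handles all cases at once and is actually cleaner than the paper's argument, which splits into the cases $m=1$, $m\geq 2$ (solving for $b_2$ and checking signs), and all quotients equal.
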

\begin{proof}
    Otherwise, let us consider the quotients $a_{in} / a_{i1}$ (although the argument works if we take any $1<k \leq n$ instead of $n$). Now, let us take the largest of these quotients and, for simplicity, assume that it is $a_{1n} / a_{11}$. Then $\{\alpha_1\otimes p \mid p\in\omega\backslash\{0\}\} $ would be an infinite antichain. To see this, let us take $\alpha_1 \otimes q$ and $\alpha_1 \otimes p$ with $0 < p \lneq q$. We will show that they are not comparable.
    
    If they are indeed comparable, $\alpha_1 \otimes q-\alpha_1 \otimes p$ is generated by $\Gamma$. Since $\alpha_1, \dots, \alpha_m$ are the smallest elements of $\Gamma$, we can obtain $\alpha_1 \otimes q-\alpha_1 \otimes p$ only through a linear combination of the form $\alpha_1\otimes b_1\oplus\cdots\oplus\alpha_m\otimes b_m$ with $b_1,\dots,b_m\in\omega$. This gives us $n$ equations, two of which are $b_1 a_{11}+\cdots+b_m a_{m1}=a_{11}(q-p)$ and $b_1 a_{1n}+\cdots+b_m a_{mn}=a_{1n} q$. If $m = 1$, there is a solution only if $p = 0$, which is absurd. If $m\geq1$, we obtain $$b_2=\frac{a_{11}a_{1n}p+\sum_{i=3}^mb_i(a_{1n}a_{i1}-a_{in}a_{11})}{a_{11}a_{2n}-a_{1n}a_{21}}. $$ Because of the way the quotients $a_{in} / a_{i1}$ are, we see that the numerator is always positive, while the denominator is negative, so $b_2$ would be a negative integer, which is absurd.

    And if all the quotients $a_{in} / a_{i1}$ are equal, the system of equations does not even have a solution. This proves that $\{\alpha_1\otimes p \mid p\in\omega\backslash\{0\}\} $ is an infinite antichain.
\end{proof}
Thus, we do not lose much by assuming the condition of the following result.
\begin{theorem}\label{charwpog}
    Let $\Gamma$ be a finite set of ordinals containing a set $\{p_1,\dots,p_n\}$ of relatively prime natural numbers. If $(\mathcal{S}^{\overline{e}(\Gamma)+1};\leq_{\mathcal{S}^{\overline{e}(\Gamma)+1}})$ contains no infinite antichains, then $\Gamma$ is a $\mathsf{wpog}$.
\end{theorem}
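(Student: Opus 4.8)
The plan is to fix $\sigma_0 := \overline{e}(\Gamma)+1$, the least level at which the family is defined (since $\Gamma$ is finite, $\overline{e}(\Gamma)$ is attained), and to prove by transfinite induction on $\sigma \geq \sigma_0$ that $(\mathcal{S}^\sigma;\leq_{\mathcal{S}^\sigma})$ has no infinite antichain; the base case $\sigma=\sigma_0$ is precisely the hypothesis. I would first record that descending chains never have to be controlled separately: by \hyperref[lemposetstr]{Lemma \ref{lemposetstr}}, $\alpha\leq_{\mathcal{S}^\sigma}\beta$ forces $\alpha<\beta$ as ordinals, so every $\leq_{\mathcal{S}^\sigma}$-descending chain is an ordinal-descending chain and hence finite. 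Thus ``well partial order'' reduces entirely to ``no infinite antichain.''

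The engine of the induction is a comparability observation. If $\alpha,\beta\in\mathcal{S}^\sigma$ have different leading exponents, say $\overline{e}(\alpha)<\overline{e}(\beta)$, then $\alpha<\omega^{\overline{e}(\beta)}$ is absorbed by the leading term of $\beta$, so $\alpha+\beta=\beta$ and therefore $\beta-\alpha=\beta\in\mathcal{S}^\sigma$; that is, $\alpha\leq_{\mathcal{S}^\sigma}\beta$. Consequently every antichain $\mathcal{A}\subseteq\mathcal{S}^\sigma$ consists of elements sharing a single leading exponent $e<\sigma$. Since the elements of $\mathcal{S}^\sigma$ of leading exponent $\leq e$ are exactly $\mathcal{S}^\sigma\cap\omega^{e+1}=\mathcal{S}^{e+1}$ with the induced order, I get $\mathcal{A}\subseteq\mathcal{S}^{e+1}$. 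If $e+1<\sigma$ this contradicts the induction hypothesis (or, when $e+1\leq\sigma_0$, the base hypothesis, as a subposet of $\mathcal{S}^{\sigma_0}$), so the only case left to settle is $\sigma=\tau+1$ with $e=\tau$: an antichain all of whose elements have leading exponent exactly $\tau\geq\sigma_0$.

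For this case I would write each $\beta\in\mathcal{A}$ as $\beta=\omega^\tau b_\beta+\rho_\beta$ with $b_\beta\in\omega\setminus\{0\}$ and $\rho_\beta<\omega^\tau$, observing via the preceding lemma on $\Gamma$-closed ordinals that the tail $\rho_\beta$ again lies in $\langle\Gamma\rangle$, i.e. in $\mathcal{S}^\tau$. The crucial algebraic input is that $\omega^\tau\otimes p_j=\omega^\tau p_j\in\langle\Gamma\rangle$ for each $p_j$, whence $\omega^\tau k\in\langle\Gamma\rangle$ for every $k$ in the numerical semigroup $S=\langle p_1,\dots,p_n\rangle$, which is cofinite in $\omega$ because the $p_j$ are relatively prime. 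A short ordinal-subtraction computation then yields two facts: if $b_{\beta'}-b_\beta\in S$ then $\beta'-\beta=\omega^\tau(b_{\beta'}-b_\beta)\oplus\rho_{\beta'}\in\langle\Gamma\rangle$, so $\beta\leq_{\mathcal{S}^\sigma}\beta'$; and if $b_\beta=b_{\beta'}$ then the common leading term cancels, so $\beta\leq_{\mathcal{S}^\sigma}\beta'$ is equivalent to $\rho_\beta\leq_{\mathcal{S}^\tau}\rho_{\beta'}$. The first fact forces any two distinct leading coefficients occurring in $\mathcal{A}$ to differ by a non-element of $S$, hence (by cofiniteness of $S$) to lie within a window of length at most the Frobenius number of $S$, so only finitely many leading coefficients occur. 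The second fact says that, for each fixed leading coefficient, the corresponding members of $\mathcal{A}$ form an antichain in $\mathcal{S}^\tau$, which is finite by the induction hypothesis. Hence $\mathcal{A}$ is finite, a contradiction.

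The main obstacle is the bookkeeping compressed into ``a short ordinal-subtraction computation'': verifying tail-closure $\rho_\beta\in\langle\Gamma\rangle$ and the two subtraction identities for a general finite $\Gamma$ not contained in $\omega$. Both rest on the decomposition lemma — splitting $\beta$ along $\Gamma$-closed exponents into blocks $\omega^{\alpha_i}\otimes\beta_i$ with $\beta_i$ in the base monoid — together with the fact (\hyperref[lemamon]{Lemma \ref{lemamon}}) that $\langle\Gamma\rangle$ is closed under the natural sum of Cantor normal form parts living at disjoint exponent levels. For $\Gamma\subset\omega$ this is transparent, since there $\mathcal{S}^\sigma$ is exactly the set of ordinals below $\omega^\sigma$ whose Cantor coefficients all lie in $S$, and $\leq_{\mathcal{S}^\sigma}$ just compares the two ordinals at their highest differing exponent; the real care lies in lifting this picture through the $\otimes$-blocks of the general case.
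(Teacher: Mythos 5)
Your two structural moves are sound: comparability across distinct leading exponents (if $\overline{e}(\alpha)<\overline{e}(\beta)$ then $\alpha+\beta=\beta$, so $\beta-\alpha=\beta\in\mathcal{S}^\sigma$) is a correct and genuinely useful observation, and for $\Gamma\subset\omega$ your induction step is complete and arguably cleaner than the paper's: there $\mathcal{S}^\sigma$ is exactly the set of ordinals below $\omega^\sigma$ whose Cantor coefficients lie in $S=\langle p_1,\dots,p_n\rangle$, tails stay in the monoid, and your two subtraction facts plus cofiniteness of $S$ finish the argument. The gap is that for $\Gamma\not\subset\omega$ the statements you defer to ``a short ordinal-subtraction computation'' are not bookkeeping; they are false. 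Take $\Gamma=\{2,3,\omega\cdot2+1\}$, which satisfies the hypothesis of the theorem (it is even a $\mathsf{wpog}$, by the corollary that follows this theorem, with $\alpha=0$ there; one can also check directly that $\langle\Gamma\rangle^{2}$ has no infinite antichain). Every element of $\langle\Gamma\rangle^{3}$ has the form $a\otimes2\oplus b\otimes3\oplus c\otimes(\omega\cdot2+1)$ with $a,b<\omega^3$, $c<\omega^2$, that is, $\omega^2(2a_2+3b_2+2c_1)+\omega(2a_1+3b_1+2c_0+c_1)+(2a_0+3b_0+c_0)$. Then $\beta=\omega\otimes(\omega\cdot2+1)=\omega^2\cdot2+\omega\in\mathcal{S}^3$ has tail $\rho_\beta=\omega\notin\mathcal{S}^2$, so tail closure fails; and $\beta'=\omega^2\otimes3\oplus\omega\otimes(\omega\cdot2+1)=\omega^2\cdot5+\omega\in\mathcal{S}^3$ has $b_{\beta'}-b_\beta=3\in S$, yet $\beta'-\beta=\omega^2\cdot3+\omega\notin\mathcal{S}^3$ (zero constant term and $\omega$-coefficient $1$ force $c_1=1$, making the $\omega^2$-coefficient $2a_2+3b_2+2\neq3$). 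So $\beta$ and $\beta'$ are incomparable even though their leading coefficients differ by an element of $S$, killing your first fact; and since the tails are not in $\mathcal{S}^\tau$, your second fact does not even parse, so the induction hypothesis cannot be applied to the set of tails attached to a fixed leading coefficient.

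This interference between the block produced by an infinite generator and the lower-order part is precisely what the paper's proof is built to handle, and it is the core difficulty of the theorem rather than a technicality. The paper decomposes elements of $\langle\Gamma\rangle^{\sigma+1}$ as $\omega^{\gamma}\otimes\alpha\oplus\xi$ where $\alpha$ is an \emph{entire} element of the base monoid $\langle\Gamma\rangle^{\overline{e}(\Gamma)+1}$ (not just a single Cantor term) and $\xi\in\langle\Gamma\rangle^{\sigma}$, so that the residual part is in the monoid by construction; it then proves a separate claim allowing the upper exponents of $\alpha$ to be replaced by a single term $\omega^{\beta}k_{\beta}$ while remaining in the monoid, and uses that claim to fix coefficients one at a time until an infinite antichain in $\langle\Gamma\rangle^{\sigma}$ is extracted, contradicting the induction hypothesis. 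Your outline contains no substitute for this mechanism, so the proof is incomplete exactly in the case that distinguishes the theorem from the $\Gamma\subset\omega$ setting; a correct write-up would either have to import the paper's claim-and-coefficient-fixing argument or restrict the statement to $\Gamma\subset\omega$.
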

\begin{proof}
    By transfinite induction on the exponent of the structures $(\mathcal{S}^{\sigma};\leq_{\mathcal{S}^{\sigma}})$. The base case is our hypothesis. Now, assume that $\Gamma\subset\omega^{\sigma}$ and that there are no infinite antichains in $(\mathcal{S}^{\sigma};\leq_{\mathcal{S}^{\sigma}})$. For any $\zeta \in \langle \Gamma \rangle^{\sigma + 1}$, either $\zeta\in\langle\Gamma\rangle^{\sigma}$ or there exist $\alpha \in \langle \Gamma \rangle^{\overline{e}(\Gamma) + 1}$ and $\xi \in \langle \Gamma \rangle^{\sigma}$ such that $\zeta = \omega^{\gamma} \otimes \alpha \oplus \xi$ for some $\gamma $ satisfying $\overline{e}(\omega^\gamma\otimes\alpha)=\sigma $. Let us consider an infinite set \( A= \{\alpha_i \otimes \omega^{\gamma} \oplus \xi_j \mid i,j \in \omega\}\subset\langle\Gamma\rangle^{\sigma+1}\), where $\overline{e}(\omega^\gamma\otimes\alpha_i)=\sigma $ and $\{\alpha_i \mid i\in\omega\}$ and $\{\xi_j \mid j\in\omega\}$ are arbitrary subsets of $\langle \Gamma \rangle^{\overline{e}(\Gamma)+1}$ and $\langle \Gamma \rangle^{\sigma}$, respectively. If for the terms with $\omega^\sigma$ of $A$ there are infinitely many distinct coefficients, then the set $\{\alpha_i \mid i\in\omega\}$ must be infinite. Hence we may find $i,i'\in\omega$ such that $\alpha' = \alpha_{i'} - \alpha_i \in \langle \Gamma \rangle^{\overline{e}(\Gamma)+1}$ and $\overline{e}(\alpha') = \overline{e}(\alpha_{i'})$. Consequently, \[ \alpha_{i'} \otimes \omega^{\gamma} \oplus \xi_{j'} -\alpha_i \otimes \omega^{\gamma} \oplus \xi_j = \alpha' \otimes \omega^{\gamma} \oplus \xi_{j'}, \] for arbitrary $j,j'$. Therefore, if there are infinitely many coefficients for the $\omega^\sigma$ term, the set $A$ cannot be an infinite antichain. Before proceeding, we will need the following claim.
\begin{claim}
    Let $\alpha=\omega^{\alpha_1} a_1 + \cdots + \omega^{\alpha_m} a_m\in \langle \Gamma \rangle^{\overline{e}(\Gamma)+1}$. For every ordinal $\beta$ satisfying $\alpha_m\leq\beta<\alpha_1 $, there exists $k_{\beta}\in \omega$ such that \[\alpha_\beta:=\omega^{\beta}k_{\beta}  + \omega^{\alpha_{i-1}} a_{i-1}+ \cdots + \omega^{\alpha_m} a_m \in\langle \Gamma \rangle^{\overline{e}(\Gamma)+1}, \] where $\alpha_{i-1}<\beta\leq\alpha_i$. If $\alpha_i-\alpha_{i-1}\geq\omega$ and $\alpha_{i-1}<\beta<\alpha_i$, we can take $k_\beta=0$.
\end{claim}
\begin{proof}[Proof of the claim] This is clear for $\alpha_m$. For $\alpha_m<\beta<\alpha_1$, consider the set $ \{\alpha\oplus\omega^{\beta}\otimes(b_1p_1+\cdots+b_np_n) \ |\ b_1,\dots,b_n\in\omega\}\subset\langle\Gamma \rangle^{\overline{e}(\Gamma)+1},$ which is not an antichain, by assumption. Therefore, we can find two comparable elements with difference $\omega^{\beta}d_\beta+\omega^{\alpha_{i-1}} a_{i-1} + \cdots + \omega^{\alpha_m} a_m\in\langle\Gamma \rangle^{\overline{e}(\Gamma)+1}$. Then, we take $k_{\beta} = d_\beta$.

For the second part, consider the set $\{\alpha \oplus \omega^{\alpha_{i-1}+k} p_1 \mid k \in \omega\}$. By similar arguments, we see that there are infinitely many elements of the form $\omega^{\alpha_{i-1}+k} p_1+ \omega^{\alpha_{i-1}} a_{i-1}+ \cdots + \omega^{\alpha_m} a_m $ in $\langle \Gamma \rangle^{\overline{e}(\Gamma)+1}$. Then, if we were unable to obtain $\omega^{\alpha_{i-1}} a_{i-1} + \cdots + \omega^{\alpha_m} a_m$, then there would have to be infinitely many generators in $\Gamma$, which is not the case, so $\omega^{\alpha_{i-1}} a_{i-1} + \cdots + \omega^{\alpha_m} a_m\in \langle \Gamma \rangle^{\overline{e}(\Gamma)+1}$.
\end{proof}
    From what we saw at the beginning, an antichain in $\langle\Gamma\rangle^{\sigma+1}$ would give us an antichain of the form $A=\{\alpha \otimes \omega^{\gamma} \oplus \xi_j \mid j \in \omega\}$ where $\Xi=\{\xi_j \mid j\in\omega\}$ is some subset of $\langle \Gamma \rangle^{\sigma } $ and $\alpha\in\langle \Gamma \rangle^{\overline{e}(\Gamma)+1}$ is fixed and satisfies $ \overline{e}(\omega^\gamma\otimes\alpha)=\sigma$. Assume that $\beta=\overline{e}(\Xi)\in\{\overline{e}(\xi_j)\mid j\in\omega\}$; that is,  for all $j\in\omega$, the first unfixed term of $\alpha \otimes \omega^{\gamma} \oplus \xi_j$ is of the form $\omega^\beta \cdot n_j^\beta$ whenever $\alpha_{i-1} < \beta < \alpha_i$, or $\omega^\beta \cdot (n_j^\beta + a_i)$ when $\beta = \alpha_i$. If there are infinitely many $n_j^\beta$, we can apply the claim and obtain $\{\alpha_\beta \otimes \omega^{\gamma} \oplus \xi_j \mid j \in \omega\}\subset\langle\Gamma\rangle^\sigma$. This cannot be an antichain, so we may compare two elements $\alpha_\beta \otimes \omega^{\gamma} \oplus \xi_j$ and $\alpha_\beta \otimes \omega^{\gamma} \oplus \xi_{j'}$. It is not hard to see that, consequently, we can also compare $\alpha \otimes \omega^{\gamma} \oplus \xi_j$ and $\alpha \otimes \omega^{\gamma} \oplus \xi_{j'}$, which is absurd. Therefore, we may assume that there are only finitely many distinct $n_j^\beta$’s and, in particular, we may fix one of them and obtain a new infinite antichain $A' \subseteq A$ whose first and second coefficients are fixed. If $\overline{e}(\Xi)>\overline{e}(\xi_j)$ for all $j\in\omega$, we apply the second part of the claim in a similar way.
    
    We may continue fixing coefficients, using the second part of the claim in the cases where $\alpha_i - \alpha_{i-1} \geq \omega$. After a finite number of steps, we will have fixed all coefficients between $\alpha_1$ and $\alpha_m$, and we will conclude that there is an infinite subset of $\{\xi_j \mid j \in \omega\}$ that forms an antichain, which is impossible. Hence, $A$ cannot be an antichain. Finally, if $\sigma$ is a limit ordinal, we proceed as in \hyperref[profini]{Proposition \ref{profini}}.
\end{proof}
It is worth noting that this theorem essentially shows that there exists a ``Hanf number'' for the property ``the poset contains no antichains'' and that, in fact, this number is the smallest
possible ordinal. This, in part, motivates \hyperref[ques]{Question \ref{ques}}.

\begin{corollary}
    Let $\Gamma$ be a finite set of generators, and let $ p_1, \dots,p_n \in \Gamma$ be relatively prime. If the remaining elements of $\Gamma$ are $\omega^{\alpha+1} a_1 + \omega^{\alpha} b_1, \dots, \omega^{\alpha+1} a_m + \omega^{\alpha} b_m$, then $\Gamma$ is a $\mathsf{wpog}$.
\end{corollary}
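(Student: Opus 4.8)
The plan is to invoke \hyperref[charwpog]{Theorem \ref{charwpog}}. Since $\Gamma$ contains the relatively prime naturals $p_1,\dots,p_n$, it suffices to prove that the single poset $(\mathcal S^{\overline e(\Gamma)+1};\leq_{\mathcal S^{\overline e(\Gamma)+1}})$ has no infinite antichain. Each remaining generator $g_j=\omega^{\alpha+1}a_j+\omega^{\alpha}b_j$ satisfies $\overline e(g_j)=\alpha+1$, while $\overline e(p_i)=0$, so $\overline e(\Gamma)=\alpha+1$ and the relevant structure is $\langle\Gamma\rangle^{\alpha+2}$. Throughout I would assume $a_j\geq 1$ (if some $a_j=0$ the generator lives entirely at level $\alpha$ and merely enlarges the level-$\alpha$ semigroup used below, which the same argument absorbs).

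First I would pin down the shape of the elements of $\langle\Gamma\rangle^{\alpha+2}$. Because $\omega^{\alpha+2}$ is the ambient bound, multiplying $g_j$ by any $\delta$ with leading exponent $\geq 1$ overshoots, so $\delta\otimes g_j\in\omega^{\alpha+2}$ forces $\delta\in\omega$ and then $\delta\otimes g_j=\omega^{\alpha+1}(\delta a_j)+\omega^{\alpha}(\delta b_j)$; only the $p_i$ can feed exponents below $\alpha$. Grouping an arbitrary generator-combination by exponent then yields the normal form
\[
\langle\Gamma\rangle^{\alpha+2}=\bigl\{\,\omega^{\alpha+1}c+\omega^{\alpha}d+\rho \;\bigm|\; (c,d)\in M,\ \rho\in\langle p_1,\dots,p_n\rangle^{\alpha}\,\bigr\},
\]
where $M\subseteq\mathbb N^2$ is the submonoid generated by the coupled vectors $(a_j,b_j)$ together with $(p_i,0)$ and $(0,p_i)$, and $S_p:=\langle p_1,\dots,p_n\rangle^1$ is the numerical semigroup they span. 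Coprimality enters here: $S_p$ is cofinite, so $M\supseteq\{(s,t):s,t>F\}$ for the Frobenius number $F$ of $S_p$. Using \hyperref[lemposetstr]{Lemma \ref{lemposetstr}} and computing the ordinal difference $\beta'-\beta$ in each of the three regimes $c<c'$, $c=c'$ with $d<d'$, and $c=c'$ with $d=d'$, I would record the comparability criterion: $\beta\leq_{\mathcal S^{\alpha+2}}\beta'$ iff, respectively, $(c'-c,d')\in M$; or $d'-d\in S_p$; or $\rho\leq\rho'$ inside $\langle p_1,\dots,p_n\rangle^{\alpha}$ (here $a_j\geq 1$ is what makes $(0,e)\in M\Leftrightarrow e\in S_p$).

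With this dictionary the antichain argument is a two-level extraction. Given a putative infinite antichain $\{\beta_k\}$ with data $(c_k,d_k,\rho_k)$, I split on whether $\{c_k\}$ is bounded. If it is unbounded I extract a strictly increasing subsequence of $c$'s; since $M$ contains all $(s,t)$ with both coordinates exceeding $F$, avoiding comparability forces cofinitely many $d_k$ to be $\leq F$, so I refine to $d_k\equiv d^*$. Fixing any $\beta_{k_0}$ of the antichain gives $(c_{k_0},d^*)\in M$, and adjoining $(s,0)$ with $s\in S_p$ keeps us in $M$; hence $\{g:(g,d^*)\in M\}$ is cofinite, yet the differences $c_{k_j}-c_{k_0}\to\infty$ must all avoid it, a contradiction. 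If $\{c_k\}$ is bounded I refine to $c_k\equiv c^*$; the same Frobenius/cofiniteness trick applied to the second coordinate forces $d_k\equiv d^*$ as well, after which comparability reduces exactly to $\rho_k\leq\rho_l$, so $\{\rho_k\}$ would be an infinite antichain in $\langle p_1,\dots,p_n\rangle^{\alpha}$ — impossible, since any $\Gamma\subset\omega$ is a $\mathsf{wpog}$ (the base case being established as in \hyperref[profini]{Proposition \ref{profini}}).

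The main obstacle I anticipate is not the antichain extraction, which is routine once the comparability dictionary is in hand, but establishing that dictionary rigorously: one must verify the normal form above — in particular that the two exponents $\alpha+1$ and $\alpha$ of each $g_j$ are genuinely coupled through a common multiplier $\delta_j\in\omega$, while the $p_i$-contributions at the two levels remain free — and that taking the ordinal difference $\beta'-\beta$ neither creates nor destroys membership in $\langle\Gamma\rangle$. The most delicate point is the regime $c<c'$, where the difference absorbs the entire lower part of $\beta$ and its $\langle\Gamma\rangle$-membership depends only on $(c'-c,d')$; checking that this interacts correctly with the coupling encoded in $M$ (and confirming the harmless role of a possible $a_j=0$) is where the care is needed.
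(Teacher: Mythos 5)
Your proof is correct and follows essentially the same route as the paper's: both invoke Theorem~\ref{charwpog} to reduce to excluding infinite antichains in $\langle\Gamma\rangle^{\alpha+2}$, both exploit cofiniteness of $\langle p_1,\dots,p_n\rangle^1$ beyond its Frobenius number to force comparability once the $\omega^{\alpha+1}$- and $\omega^{\alpha}$-coefficients grow, and both finish by reducing to an impossible infinite antichain in a structure generated by the $p_i$ alone. Your packaging of the coupled coefficients into the submonoid $M\subseteq\mathbb{N}^2$ with an explicit comparability dictionary is tidier bookkeeping than the paper's sequential fixing of the $\overline{p}\,\overline{r}$ and $\overline{a}\,\overline{s}+\overline{b}\,\overline{s}$ terms, but it is the same argument.
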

\begin{proof}
    Combining \hyperref[charwpog]{Theorem \ref{charwpog}} with the fact that $p_1, \dots,p_n$ do not generate infinite antichains, we see that we only need to look for antichains between $\omega^{\alpha+1}$ and $\omega^{\alpha+2}$. Let $A \subset \langle \Gamma \rangle^{\alpha+2}$ be an infinite antichain; then, its elements can be written as \[ \omega^{\alpha+1}(p_1 r_1 + \cdots + p_n r_n)+\omega^{\alpha+1}(a_1 s_1 + \cdots + a_m s_m)+\omega^{\alpha}(b_1 s_1 + \cdots + b_m s_m)+\beta,\]where $\beta \in \langle p_1,\dots,p_n \rangle^{\alpha+1}$ and $r_1,\dots,r_n, s_1,\dots,s_m \in \omega$. For simplicity, we will denote this by \( \omega^{\alpha+1}\overline{p}\,\overline{r}+\omega^{\alpha+1}\overline{a}\,\overline{s}+\omega^{\alpha}\overline{b}\,\overline{s}+\beta\). If there are infinitely many elements in $A$ with distinct $\omega^{\alpha+1}\overline{p}\,\overline{r}$ terms, since  $S=\langle p_1,\dots,p_n\rangle^{1}$  generates all natural numbers above its Frobenius number, we may find two elements of $A$, \[ \zeta'=\omega^{\alpha+1}\overline{p}\,\overline{r}'+ \omega^{\alpha+1}\overline{a}\,\overline{s}' + \omega^{\alpha}\overline{b}\,\overline{s}' + \beta'\quad\text{and}\quad\zeta=\omega^{\alpha+1}\overline{p}\,\overline{r}+\omega^{\alpha+1}\overline{a}\,\overline{s}+ \omega^{\alpha}\overline{b}\,\overline{s}+\beta,\]such that \(\overline{p}\,\overline{r}+\overline{a}\,\overline{s} \in S\) and $ \overline{p}\,\overline{r}+\overline{a}\,\overline{s}\leq_{S} \overline{p}\,\overline{r}'$. But then, \[ \zeta' - \zeta=\omega^{\alpha+1}(\overline{p}\,\overline{r}'-(\overline{p}\,\overline{r}+\overline{a}\,\overline{s}))+ \omega^{\alpha+1}\overline{a}\,\overline{s}'+ \omega^{\alpha}\overline{b}\,\overline{s}'+ \beta'\in \langle \Gamma \rangle^{\alpha+2},\]that is, $\zeta$ and $\zeta'$ are comparable. Therefore, there are only finitely many elements in $A$ with distinct $\omega^{\alpha+1}\overline{p}\,\overline{r}$ terms. We may fix one of them and obtain a new infinite antichain $A' \subseteq A$.

    So, we now suppose that in $A'$ there are infinitely many elements with distinct $\omega^{\alpha+1}\overline{a}\,\overline{s} + \omega^{\alpha}\overline{b}\,\overline{s}$ terms. Therefore, at some point we will pass the Frobenius number of $\{p_1,\dots,p_n\}$, and there will be infinitely many terms such that $\omega^{\alpha+1}\overline{a}\,\overline{s}, \omega^{\alpha}\overline{b}\,\overline{s} \in \langle p_1,\dots,p_n \rangle^{\alpha+2}$. Thus, there is an infinite subset of $A'$ contained in $\langle p_1,\dots,p_n \rangle^{\alpha+2}$, which contains no antichains. Therefore, we could also fix a term of the form $\omega^{\alpha+1}\overline{a}\,\overline{s} + \omega^{\alpha}\overline{b}\,\overline{s}$ and obtain a new infinite chain $A''\subseteq A'$; however, this would give us an infinite antichain in $\langle p_1,\dots,p_n \rangle^{\alpha+1}$, which is impossible.
\end{proof}
This shows that the class of well partial order generators is quite diverse.

\section{Bounds for the function $\textnormal{ch}(\Gamma)$}
We begin by proving, constructively, that $\textnormal{ch}(\Gamma)=1$ for a special family of generators, which we now proceed to define.
\begin{definition}
    Given a minimal set $\{p_1,\dots,p_n\}\subset\omega$ of generators, we will say that $\langle p_1,\dots,p_n\rangle^1\subseteq\omega$ has \textit{maximal embedding dimension} if $\min\{p_1,\dots,p_n\}=n$.
\end{definition}
\begin{proposition}\label{propmaxemd}
    If $\mathcal{S}^1 $ has maximal embedding dimension and $B$ has a winning strategy on $(\mathcal{S}^1;\leq_{\mathcal{S}^1})$, then $B$ has a winning strategy on $(\mathcal{S}^\sigma;\leq_{\mathcal{S}^\sigma})$ for all $\sigma\in\textnormal{Ord}$.
\end{proposition}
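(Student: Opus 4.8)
The plan is to prove by transfinite induction on $\sigma$ that $B$ has a winning strategy for Chomp on $(\mathcal{S}^\sigma;\leq_{\mathcal{S}^\sigma})$. The base case $\sigma=1$ is exactly the hypothesis, and the limit case is free: if $B$ wins on $(\mathcal{S}^\alpha;\leq_{\mathcal{S}^\alpha})$ for every $\alpha<\delta$, then $B$ wins for unboundedly many $\alpha<\delta$, so \hyperref[downtrans]{Lemma \ref{downtrans}} yields a winning strategy on $(\mathcal{S}^\delta;\leq_{\mathcal{S}^\delta})$. (Recall that $\Gamma\subset\omega$ guarantees that each $(\mathcal{S}^\sigma;\leq_{\mathcal{S}^\sigma})$ is a well partial order, so every play is finite, Zermelo's theorem applies, and every reachable position is an order ideal of $\mathcal{S}^\sigma$.) Hence everything is concentrated in the successor step.

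For the successor step I would work inside the identification $(\mathcal{S}^{\sigma+1};\leq_{\mathcal{S}^{\sigma+1}})\cong(\mathcal{S}^1\times\mathcal{S}^\sigma;\leq)$ given by the isomorphism $f^{\sigma+1}((a,\beta))=\omega^{\sigma}\otimes a+f^\sigma(\beta)$ (here $\sigma'=\sigma$ because $\Gamma\subset\omega$), in which the first coordinate $a\in\mathcal{S}^1$ is the coefficient of the leading term $\omega^{\sigma}$. The point is the way a move splits into these two coordinates: playing $(a,\beta)$ with $a\neq 0$ erases, in a single stroke, every fibre $\{a'\}\times\mathcal{S}^\sigma$ with $a<_{\mathcal{S}^1}a'$ and, inside the fibre over $a$, erases precisely the up-set of $\beta$; playing $a=0$ erases all fibres over nonzero coefficients at once. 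Thus the leading coordinate is itself played as a game of Chomp on $\mathcal{S}^1$, while each fibre carries a copy of the game on $\mathcal{S}^\sigma$, the fibres being coupled through the order $\leq_{\mathcal{S}^1}$. My strategy for $B$ is to run the winning strategy on $\mathcal{S}^1$ supplied by the hypothesis (made explicit, in the maximal-embedding-dimension case, by the results of \cite{ALCO_2018__1_3_371_0}) to control which leading coefficients survive, and the inductive winning strategy on $\mathcal{S}^\sigma$ to answer any move that stays inside a fixed fibre, combining the two so that after each of $B$'s replies the position is, fibrewise, a product of second-player-win games.

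The hard part is that a single move of $A$ may act on both layers at once --- trimming the list of admissible leading coefficients while also eating into a fibre --- so the two sub-strategies cannot be run in isolation, and $B$ must be able to repair the lower layer after $A$ disturbs the top one. This is exactly where maximal embedding dimension enters. I would use the characterization that, writing $m=n$ for the multiplicity, one has $a+b-m\in\mathcal{S}^1$ for all nonzero $a,b\in\mathcal{S}^1$ (equivalently, the shift $(\mathcal{S}^1\setminus\{0\})-m$ is again a submonoid). This makes the fibres \emph{uniform}: above any surviving coefficient the remainder is an isomorphic copy of a game $B$ already controls, and after any exchange the coefficients that can still appear form a legal order ideal of $\mathcal{S}^1$ on which $B$'s level-one strategy stays valid. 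The copying $B$ performs inside the fibres is modelled on the mirroring argument of \hyperref[propsim]{Proposition \ref{propsim}}.

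Assembling these ingredients, the invariant ``the current order ideal is a product of $B$-winning positions driven by $B$'s $\mathcal{S}^1$-strategy'' should be preserved by every pair (move of $A$, reply of $B$); verifying this preservation, and in particular checking that the MED identity always leaves $B$ a legal restoring reply, is the main technical obstacle. Since the game is finite, maintaining the invariant forces $A$ to be the player who must eventually remove the global minimum. Together with the base case and the limit argument via \hyperref[downtrans]{Lemma \ref{downtrans}}, this establishes a winning strategy for $B$ on $(\mathcal{S}^\sigma;\leq_{\mathcal{S}^\sigma})$ for every ordinal $\sigma$.
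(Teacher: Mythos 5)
Your base and limit cases coincide with the paper's (the limit case is exactly the second part of \hyperref[downtrans]{Lemma \ref{downtrans}}), but the successor step --- which you yourself flag as ``the main technical obstacle'' --- is a genuine gap, not a deferred verification. The plan of running $B$'s abstract winning strategy on $\mathcal{S}^1$ for the leading coefficients and the inductive winning strategy on $\mathcal{S}^\sigma$ inside the fibres, maintaining the invariant that the position is ``fibrewise a product of $B$-winning positions,'' cannot work as stated: the paper's own example shows that $B$ winning on $(P;\leq_P)$ and on $(T;\leq_T)$ does \emph{not} imply $B$ wins on $(P\times T;\leq_{P\times T})$ (take $P=P_3$ and $T=P_2$; then $(1,1)$ is a winning first move for $A$). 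So no argument that uses only the \emph{existence} of the two winning strategies can close the successor step; the lexicographic coupling of the fibres destroys precisely the product structure your invariant relies on. Your appeal to the MED identity $a+b-n\in\mathcal{S}^1$ gestures at the extra structure needed, but you never specify $B$'s reply to a move $(a,\beta)$ with $a\neq 0$ and $\beta$ not minimal, and that is exactly where a combination scheme breaks down.

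The paper's proof is quite different: it uses no induction on $\sigma$ for successors. For any $\sigma=\delta+1$ it builds one uniform strategy by adapting \cite[Theorem 3.4]{ALCO_2018__1_3_371_0}. Concretely, $\mathcal{S}^{\delta+1}$ is partitioned into layers $\mathcal{S}_k$ according to the leading coefficient, and the decisive input from that theorem is that, since $B$ wins Chomp on the MED semigroup $\mathcal{S}^1$, the number of generators $n$ is \emph{even}. This parity lets $B$ pair the $n$ branches of each layer and mirror: if $A$ plays $\omega^\delta\cdot kn+\beta$, then $B$ answers $\omega^\delta\cdot((k-1)n+p_i)+\beta$ for a suitable $2\leq i\leq n$, and if $A$ plays $\omega^\delta\cdot((k-1)n+p_i)+\beta$, then $B$ answers $\omega^\delta\cdot kn+\beta$ --- the same tail $\beta$, exactly as in the copying argument of Proposition \ref{propsim}. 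This keeps all lower layers intact and forces $A$ to be the first player to descend a layer, hence eventually the first to take $0$. If you want to repair your write-up, replace the strategy-combination invariant by this layer-pairing argument; the fact doing the real work is the parity of $n$ supplied by \cite[Theorem 3.4]{ALCO_2018__1_3_371_0}, not the identity $a+b-n\in\mathcal{S}^1$.
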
\begin{proof}
    By the second part of \hyperref[downtrans]{Lemma \ref{downtrans}}, we only need to prove the proposition for successors. Then, take $\sigma=\delta+1$ and player $B$'s winning strategy will consist of ``forcing'' player $A$ to play as if they were playing on $\mathcal{S}^1$. We will follow the same steps described in \cite[Theorem 3.4]{ALCO_2018__1_3_371_0}, but adapting each move to our case.

    If the minimal set of generators is $\{p_1,\dots,p_n\}$, we divide $\mathcal{S}^\sigma$ into layers $\{\mathcal{S}_k\}_{k\in\mathbb{N}}$ in the following way: $\mathcal{S}_0:=\{0\}$ and $\mathcal{S}_k:=\bigcup\{\mathcal{S}^\sigma_{(k-1)n+p_i} \mid 1\leq i\leq n \}$. For any first move by $A $ in $\mathcal{S}_k $, we will see that $B$ can respond with a move such that the remaining poset $P$ satisfies two conditions: $\bigcup_{l<k}\mathcal{S}_l\subseteq P\subsetneq\bigcup_{l\leq k}\mathcal{S}_l$ and $A$ will eventually have to be the first to choose an element from some layer $\mathcal{S}_l$ with $l<k$. We need to distinguish two cases:
    \begin{itemize}
    \item[\textit{i})] Assume that player $A$ chooses an element of the form $ \omega^\delta\cdot kn+\beta $. By \cite[Lemma 3.3]{ALCO_2018__1_3_371_0}, the first condition is satisfied. Now, note that, by \cite[Theorem 3.4]{ALCO_2018__1_3_371_0}, the number of generators is even, and this gives us the second condition, as we will be able to divide any layer into two branches, just like in \hyperref[propsim]{Proposition \ref{propsim}}, and follow the strategy of imitating the other player's moves. For example, player $B$'s next move will be of the form $\omega^\delta\cdot ((k-1)n+p_i)+\beta$ for some $2\leq i\leq n$.
    \item[\textit{ii})] Now assume that player $A$ chooses an element of the form $ \omega^\delta\cdot ((k-1)n+p_i)+\beta $ for some $2\leq i\leq n$. Then $B$ picks $ \omega^\delta\cdot kn+\beta $ and the resulting poset is exactly the same as if $A$ had started with that move of $B$ and $B$ had responded with $\omega^\delta\cdot ((k-1)n+p_i)+\beta$. This is the same situation we described at the end of the previous case, so we already have the two properties we are interested in.
    \end{itemize}Finally, by iterating this strategy, $A$ will be forced to be the first to choose an element from $\mathcal{S}_0$, which only contains $0$, so this player will have lost.
\end{proof}
For our main result, we will need the following two lemmas.
\begin{lemma}\label{sumprodef}
    Natural sum and product are $\Sigma_1$ functions.
\end{lemma}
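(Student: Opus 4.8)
Natural sum and product are $\Sigma_1$ functions.

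The plan is to show that the graphs of $\oplus$ and $\otimes$ are $\Sigma_1$-definable over the structure of ordinals, which—since these are total functions—suffices to call them $\Sigma_1$. The natural framework is to work in the language with $\in$ and the ordinal ordering, where Cantor normal form (CNF) is the organizing tool. The key observation I would exploit is that the CNF of an ordinal is an absolute, arithmetically simple object: the predicate ``$s$ codes the CNF of $\alpha$,'' where $s$ is a finite sequence of pairs $\langle \alpha_i, n_i \rangle$ with $\alpha_1 > \cdots > \alpha_k$ and $n_i \in \omega$ and $\alpha = \omega^{\alpha_1}\cdot n_1 + \cdots + \omega^{\alpha_k}\cdot n_k$, is $\Delta_0$ (bounded) relative to the basic ordinal operations $+$, $\cdot$, and $\xi \mapsto \omega^\xi$. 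I would first record that ordinal exponentiation $\xi \mapsto \omega^\xi$, ordinary ordinal addition, and ordinary ordinal multiplication all have $\Delta_0$ (indeed absolute, hence $\Sigma_1 \cap \Pi_1$) graphs: each is defined by a transfinite recursion whose run can be witnessed by a set (a function with domain an ordinal) all of whose defining clauses are bounded, so the existence of such a witness is $\Sigma_1$ and its uniqueness gives the $\Pi_1$ side.

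Building on this, I would give the explicit $\Sigma_1$ definition directly from the Definition of natural sum and product in the excerpt. For $\oplus$: the formula ``$\gamma = \alpha \oplus \beta$'' asserts \emph{there exist} finite sequences $s, t, u$ coding the CNFs (padded to a common list of exponents $\gamma_1 > \cdots > \gamma_m$, allowing zero coefficients) of $\alpha$, $\beta$, $\gamma$ respectively, such that the exponent lists of $s$ and $t$ agree with that of $u$ and each coefficient of $u$ is the ordinary natural-number sum of the corresponding coefficients of $s$ and $t$. Every conjunct here is either a $\Delta_0$ statement about the finite codes or an instance of the already-established $\Sigma_1$ graphs for $\omega^{(\cdot)}$, $+$, and $\cdot$ used to verify that $s$, $t$, $u$ genuinely decode to $\alpha$, $\beta$, $\gamma$. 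Since a $\Sigma_1$ formula is closed under existential quantification and (bounded) conjunction, the whole thing is $\Sigma_1$. For $\otimes$ I would do the same, writing out the double-sum definition $\alpha \otimes \beta = \bigoplus_{i,j} \omega^{\alpha_i \oplus \beta_j}\cdot a_i b_j$: one existentially quantifies the CNF codes of $\alpha$ and $\beta$, then quantifies a witness for the finite iterated $\oplus$-sum (itself definable by the $\Sigma_1$ graph of $\oplus$ applied finitely often, with the partial sums recorded in an auxiliary finite sequence), and asserts the result equals $\gamma$.

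The main obstacle, and the one step worth dwelling on, is verifying that ``$s$ codes the CNF of $\alpha$'' is genuinely low-complexity: one must check that the clause $\alpha = \omega^{\alpha_1}\cdot n_1 + \cdots + \omega^{\alpha_k}\cdot n_k$ can be evaluated without unbounded search beyond the already-secured $\Sigma_1$ graphs of the three base operations, and that each $\alpha_i$ occurring is bounded by $\alpha$ itself so the quantifiers over the exponents can be taken bounded. This is where the absoluteness of $\omega^{(\cdot)}$ really earns its keep: because every exponent in the CNF of $\alpha$ is $\leq \alpha$ and every partial ordinary sum/product appearing is $\leq \gamma$, all the quantifiers verifying correctness of the codes can be bounded by $\alpha$, $\beta$, $\gamma$, leaving only the outermost existentials over the finite codes unbounded—exactly the shape of a $\Sigma_1$ formula. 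Once this is in place, closure of $\Sigma_1$ under existential quantification and conjunction finishes both cases, and I would remark that since $\oplus$ and $\otimes$ are total and their graphs are $\Sigma_1$, the functions themselves are $\Sigma_1$ in the usual sense.
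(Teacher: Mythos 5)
Your proof is correct, but it takes a genuinely different route from the paper's. You argue directly from the Cantor normal form definition of $\oplus$ and $\otimes$: the graph of each operation is expressed by existentially quantifying finite sequences coding the CNFs of the arguments and of the value, and everything is reduced to the standard facts that ordinary ordinal addition, multiplication and base-$\omega$ exponentiation are $\Sigma_1$ (each being given by a transfinite recursion with $\Sigma_1$ clauses), together with closure of $\Sigma_1$ formulas under existential quantification, conjunction and bounded quantification. The paper avoids CNF coding altogether: it introduces the well-founded relation $\sqsubset$ on $\mathrm{Ord}\times\mathrm{Ord}$, applies the $\Sigma_1$-recursion theorem of Drake to the recursion clauses $G$ and $H$, and then invokes Altman's order-theoretic characterizations --- $\alpha\oplus\beta$ is the least ordinal strictly above all $\gamma\oplus\beta$ and $\alpha\oplus\delta$ with $\gamma<\alpha$, $\delta<\beta$, and $\alpha\otimes\beta$ is the least $\varepsilon$ satisfying $\varepsilon\oplus(\gamma\otimes\delta)>(\alpha\otimes\delta)\oplus(\gamma\otimes\beta)$ for all such $\gamma,\delta$ --- to identify the resulting $\Sigma_1$ functions with $\oplus$ and $\otimes$. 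Your approach needs nothing beyond the paper's own definition of the operations (no appeal to Altman's equivalence), at the cost of sequence-coding bookkeeping; the paper's approach gets $\Sigma_1$-ness in two lines from the recursion theorem, at the cost of citing that nontrivial equivalence. One inaccuracy in your write-up worth fixing: the graphs of $+$, $\cdot$ and $\xi\mapsto\omega^\xi$ are not $\Delta_0$ --- witnessing, say, an order isomorphism cannot be bounded in the language of set theory --- they are $\Delta_1$ (provably total with $\Sigma_1$ graph), which is exactly what your hedge ``absolute, hence $\Sigma_1\cap\Pi_1$'' asserts and is all your argument actually uses.
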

\begin{proof}
The relation $\sqsubset\subseteq\text{Ord}\times\text{Ord}$ given by $$(\gamma,\delta)\sqsubset(\alpha,\beta)\iff (\gamma<\alpha\wedge\delta<\beta)\vee(\gamma=\alpha\wedge\delta<\beta)\vee(\gamma<\alpha\wedge\delta=\beta) $$is well-founded (we only need to use that the usual order in the ordinals is well-founded). Now, we can define, for every pair $(\alpha,\beta)\in\text{Ord}\times\text{Ord}$ and function $f:\{(\gamma,\delta) \mid (\gamma,\delta)\sqsubset(\alpha,\beta)\}\to V$, the $\Sigma_1$ function $G\big((\alpha,\beta),f\big)$: $$G\big((\alpha,\beta),f\big)=\bigcup\{f((\gamma,\delta))+1 \mid (\gamma,\delta)\sqsubset (\alpha,\beta)\}.$$ By $\Sigma_1$-recursion on $\sqsubset$ (see \cite[Theorem 3.1]{drake1974set}), this defines a unique $\Sigma_1$ function $F:\text{Ord}\times\text{Ord}\to V$ such that $F(x)=G(x,F\upharpoonright\{y:y\sqsubset x\})$. By \cite[Theorem 2.4]{altman}, $F$ is precisely $\oplus$. Similarly, the $\Sigma_1$ function $H\big((\alpha,\beta),f\big)$ given by $$H\big((\alpha,\beta),f\big)=\bigcap\left\{\varepsilon \mid (\forall \gamma\in\alpha)(\forall \delta\in\beta)\big( \varepsilon\oplus f((\gamma,\delta))>f((\alpha,\delta))\oplus f((\gamma,\beta))\big)\right\} $$defines a unique $\Sigma_1$ function which is $\otimes$.
\end{proof}

\begin{lemma}\label{lemmadef}
    For every finie set $\{\gamma_1,\dots,\gamma_n\}=\Gamma\subset\textnormal{Ord}$, the correspoding class $\{ (\mathcal{S}^\sigma;\leq_{\mathcal{S}^\sigma}) \mid\text{ $\omega^\sigma$ is $\Gamma$-closed} \}$ is $\Sigma_1$-definable with $\gamma_1,\dots,\gamma_n$ as parameters.
\end{lemma}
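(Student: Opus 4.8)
The plan is to produce, uniformly in the ordinal parameter $\sigma$, a $\Sigma_1$ description of each ingredient of the structure $(\mathcal S^\sigma;\leq_{\mathcal S^\sigma})$ and then assemble them. Throughout I would use the standard closure properties of the L\'evy class $\Sigma_1$ over a weak base theory (KP suffices, hence a fortiori ZF): $\Sigma_1$ formulas are closed under $\wedge$, $\vee$, under bounded quantifiers $\exists x\in y$ and $\forall x\in y$ (the latter by $\Delta_0$-collection), and under substitution of $\Sigma_1$ functions. By \hyperref[sumprodef]{Lemma \ref{sumprodef}}, $\oplus$ and $\otimes$ are such functions, and the same recursion scheme shows that ordinary $+$, $\cdot$ and $\sigma\mapsto\omega^\sigma$ are $\Sigma_1$ (indeed $\Delta_1$). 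I would also record that the leading-exponent function $\overline e$ is $\Delta_1$, since $\overline e(\alpha)=\mu\iff\omega^\mu\leq\alpha<\omega^{\mu+1}$, whence ``$x\in\omega^\sigma$'' is $\Delta_1$, being equivalent to $x=0\vee\overline e(x)<\sigma$.

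First I would treat the index condition ``$\omega^\sigma$ is $\Gamma$-closed''. The ordinal $\varepsilon$ of the previous lemma is determined by $\gamma_1,\dots,\gamma_n$ (through $\overline e\circ\overline e$), so I take it as a derived parameter. The key observation is that $\omega^\sigma$ is $\Gamma$-closed exactly when every exponent in the Cantor normal form of $\sigma$ is $\geq\varepsilon$, and this holds iff $\sigma=\omega^\varepsilon\cdot\tau$ for some $\tau$ under ordinary ordinal multiplication: left-distributivity of $\cdot$ over the CNF sum gives that $\omega^\varepsilon\cdot\tau$ has all exponents $\geq\varepsilon$, and conversely ordinary subtraction of exponents ($\epsilon_i\mapsto\epsilon_i-\varepsilon$) recovers a suitable $\tau$. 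Thus ``$\exists\tau\,(\sigma=\omega^\varepsilon\cdot\tau)$'' is a $\Sigma_1$ predicate expressing $\Gamma$-closedness.

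Next I would define the two relations. By the Notation, $\beta\in\mathcal S^\sigma$ iff there exist $\alpha_1,\dots,\alpha_n$ with $\alpha_i\otimes\gamma_i\in\omega^\sigma$ and $\beta=(\alpha_1\otimes\gamma_1)\oplus\cdots\oplus(\alpha_n\otimes\gamma_n)$; since $n$ is fixed this is a finite existential block over a $\Sigma_1$ matrix (finitely many applications of the $\Sigma_1$ functions $\otimes,\oplus$ recorded through intermediate witnesses, together with the $\Delta_1$ bound ``$\in\omega^\sigma$''), hence $\Sigma_1$. For the order I invoke \hyperref[lemposetstr]{Lemma \ref{lemposetstr}}: $\alpha\leq_{\mathcal S^\sigma}\beta$ iff $\alpha\in\beta$ and $\beta-\alpha\in\mathcal S^\sigma$, i.e. $\alpha\in\beta\wedge\exists\gamma(\alpha+\gamma=\beta\wedge\gamma\in\mathcal S^\sigma)$, which is $\Sigma_1$ because ordinary $+$ is $\Sigma_1$ and membership in $\mathcal S^\sigma$ was just shown $\Sigma_1$. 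Consequently the membership relation and the order relation are uniformly $\Sigma_1$-definable from $\sigma$ and $\gamma_1,\dots,\gamma_n$, which is exactly the asserted $\Sigma_1$-definability of the class.

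The only genuinely delicate point is one of bookkeeping rather than depth: I must ensure no step smuggles in an \emph{unbounded} universal quantifier, which would break $\Sigma_1$. The danger lies in the $\Gamma$-closed condition, where the naive characterization ``for all $\alpha,\beta\in\omega^\sigma$, $\alpha\oplus\beta\in\omega^\sigma$'' is harmless only because the quantifiers are bounded by the set $\omega^\sigma$ --- yet the substituted ``$\alpha\oplus\beta\in\omega^\sigma$'' is $\Sigma_1$, so absorbing a bounded universal over it already needs $\Delta_0$-collection. Replacing that characterization by the purely algebraic $\sigma=\omega^\varepsilon\cdot\tau$ sidesteps the issue, which is why I prefer it. A secondary point worth flagging is that all generators may be assumed $\geq 1$, whence $\alpha_i\leq\alpha_i\otimes\gamma_i<\omega^\sigma$ by monotonicity of $\otimes$; this bounds the witnesses in the membership formula, upgrading both the membership and the order relation from $\Sigma_1$ to $\Delta_1$, so that ``$P=\mathcal S^\sigma$'' and ``$R={\leq_{\mathcal S^\sigma}}$'' are $\Delta_1$ and the class is definable as an honest class of structures, not merely uniformly in $\sigma$.
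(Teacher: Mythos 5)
Your proof is correct, and it takes a genuinely different route from the paper's. The paper never keeps the exponent $\sigma$ as a parameter: it writes a single $\Sigma_1$ predicate $\varphi(z)$ asserting that every element of $z$ is a combination $\alpha_1\otimes\gamma_1\oplus\cdots\oplus\alpha_n\otimes\gamma_n$ and that $z$ contains every such combination whose coefficients lie below $\mathrm{rank}(z)$ (rank being a $\Sigma_1$ function), so the level is recovered implicitly from the rank of the set; the order is then defined by $\psi(x,y)=\exists z\,(\varphi(z)\wedge x\in z\wedge y\in z\wedge x\in y\wedge\mathrm{ot}(y\setminus x)\in z)$, i.e.\ by existentially quantifying over levels, rather than by your formula $\alpha\in\beta\wedge\exists\gamma(\alpha+\gamma=\beta\wedge\gamma\in\mathcal{S}^\sigma)$ from Lemma~\ref{lemposetstr} (the two agree because $\mathcal{S}^{\sigma'}\cap\omega^\sigma=\mathcal{S}^\sigma$ for $\sigma\le\sigma'$). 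What the paper's route buys is brevity: it never has to characterize $\Gamma$-closedness arithmetically. What your route buys is exactness and explicitness: keeping $\sigma$ as a parameter, expressing $\Gamma$-closedness by $\exists\tau\,(\sigma=\omega^\varepsilon\cdot\tau)$, and bounding all witnesses inside $\omega^\sigma$ lets you check that the defined class is \emph{exactly} $\{(\mathcal{S}^\sigma;\leq_{\mathcal{S}^\sigma})\mid\omega^\sigma\ \Gamma\text{-closed}\}$, with $\Delta_1$ relations, and your bookkeeping of the collection-based closure of $\Sigma_1$ under bounded universal quantifiers makes the assembly into a class of structures explicit --- points the paper glosses over. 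Indeed the paper's $\varphi$ is extensionally loose in degenerate cases (for $\Gamma=\{1\}$ it holds of \emph{every} ordinal, not only of the levels $\omega^\sigma$), which is not purely cosmetic: the application in Theorem~\ref{theoomeg1} wants the small elementarily embedded structure supplied by Bagaria's theorem to be an actual level $\mathcal{S}^\xi$, and your parametric definition guarantees this.

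Two small caveats to patch, neither affecting the structure of your argument. First, the derived parameter $\varepsilon$: the formula from the lemma preceding the statement applies only when $\Gamma\not\subset\omega$; for $\Gamma\subset\omega$ you must set $\varepsilon=0$ (with $\varepsilon=1$ your condition would wrongly exclude every $\sigma$ whose Cantor normal form has a constant term, e.g.\ $\sigma=1$), so add this trivial case split. Second, your biconditional ``$\omega^\sigma$ is $\Gamma$-closed iff every CNF exponent of $\sigma$ is $\geq\varepsilon$'' matches the reading of $\Gamma$-closedness the paper actually uses later (the limit stage of its isomorphism proposition produces exponents such as $\omega^{\varepsilon+1}$), but the literal definition demands that the last exponent equal $\varepsilon$; under that stricter reading you would simply add that $\tau$ is a successor ordinal, which is still $\Sigma_1$.
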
\begin{proof} The sets $\langle\Gamma\rangle^\sigma$ are defined by
    \begin{gather*}
    \varphi(x)=(\forall \alpha\in x)\exists\alpha_1\dots\exists\alpha_n\big( \alpha=\alpha_1\otimes\gamma_1\oplus\cdots\oplus\alpha_n\otimes\gamma_n \big) \wedge\\\wedge\exists \delta\Big(\delta=\text{rank}(x)\wedge(\forall \beta_1\in\delta)\dots(\forall \beta_n\in\delta)\big(\beta_1\otimes\gamma_1\oplus\cdots\oplus\beta_n\otimes\gamma_n\in x\big)\Big).
\end{gather*} By the previous lemma and \cite[Corollary 3.2]{drake1974set}, $\varphi(x)$ is a $\Sigma_1$ formula.

For the relation $\leq_{\mathcal{S}^\sigma}$ we have $$\psi(x,y)=\exists z(\varphi(z)\wedge x\in z\wedge y\in z\wedge x\in y\wedge\text{ot}(y\backslash x)\in z)$$ (note that $\text{ot}$ is also a $\Sigma_1$ function), which proves that $\psi(x,y)$ is a $\Sigma_1$ formula.
\end{proof}

\begin{theorem}\label{theoomeg1}
    Let $\Gamma$ be a finite $\mathsf{wpog}$. If $B$ does not have a winning strategy for every poset of the class $\{(\mathcal{S}^{\sigma};\leq_{\mathcal{S}^\sigma}) \mid \Gamma\subset\omega^{\sigma}\}$ generated by $\Gamma$, then $\textnormal{ch}(\Gamma)<\omega_1$ if $\Gamma\subset\omega$ and $\textnormal{ch}(\Gamma)<|\bigcup\Gamma|^+$ otherwise.
\end{theorem}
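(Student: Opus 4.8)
The plan is to recast $\textnormal{ch}(\Gamma)$ as an absolute, reflectable quantity and then compress it by a Löwenheim--Skolem and Mostowski collapse argument. First I would observe that, by \hyperref[downtrans]{Lemma \ref{downtrans}}, the ordinals $\sigma$ on which $B$ wins form an initial segment closed under suprema of its own members; since by hypothesis $B$ does not win everywhere, the least ordinal on which $A$ wins is a successor $\xi+1=\textnormal{ch}(\Gamma)$, and it therefore suffices to produce \emph{some} ordinal $\sigma'$ below the desired bound on which $A$ has a winning strategy, for then $\textnormal{ch}(\Gamma)\le\sigma'$.

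Fix a regular cardinal $\theta$ large enough that a witnessing winning strategy lies in $H_\theta$, and set $\mu=\aleph_0$ if $\Gamma\subset\omega$ and $\mu=|\bigcup\Gamma|$ otherwise (recall $\bigcup\Gamma=\max\Gamma$ for finite $\Gamma$). Using downward Löwenheim--Skolem, build $M\prec H_\theta$ of cardinality $\mu$ with $\Gamma\in M$, $\bigcup\Gamma\subseteq M$, and containing a pair $(\sigma_0,s_0)$ witnessing, via \hyperref[lemmadef]{Lemma \ref{lemmadef}}, that ``$s_0$ is a winning strategy for $A$ on $(\mathcal{S}^{\sigma_0};\leq_{\mathcal{S}^{\sigma_0}})$''. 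Let $\pi\colon M\to N$ be the Mostowski collapse onto a transitive set $N$. Because $\max\Gamma\subseteq M$, every ordinal $\le\max\Gamma$ lies in $M$, so $\pi$ fixes each generator and hence $\pi(\Gamma)=\Gamma$; likewise $\pi$ fixes $\omega$ and each natural number when $\Gamma\subset\omega$. Note also that $\Gamma=\pi(\Gamma)\subset\pi(\omega^{\sigma_0})=\omega^{\pi(\sigma_0)}$, so $\mathcal{S}^{\pi(\sigma_0)}$ genuinely belongs to the class under consideration.

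The heart of the matter is absoluteness. By \hyperref[sumprodef]{Lemma \ref{sumprodef}} the operations $\oplus$ and $\otimes$ are $\Sigma_1$ and total, hence $\Delta_1$ and absolute for transitive models; therefore $N$ computes $\omega^{\pi(\sigma_0)}$, the monoid $\langle\Gamma\rangle^{\pi(\sigma_0)}$, and the order $\leq_{\mathcal{S}^{\pi(\sigma_0)}}$ exactly as $V$ does, and $\mathcal{S}^{\pi(\sigma_0)}\subseteq\omega^{\pi(\sigma_0)}\in N$, so the whole poset sits inside the transitive set $N$. Crucially, $\Gamma$ is a $\mathsf{wpog}$, so $(\mathcal{S}^{\pi(\sigma_0)};\leq_{\mathcal{S}^{\pi(\sigma_0)}})$ is a well partial order and every play of Chomp on it is finite; a play is thus a finite sequence of elements of $\mathcal{S}^{\pi(\sigma_0)}\subseteq N$, and every such sequence already belongs to $N$. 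Consequently the $\Pi_1$ assertion ``for every play consistent with $s$, player $A$ wins'' ranges over exactly the same objects in $N$ and in $V$ and is absolute between them. Applying $\pi$ to the statement true in $M$, and then transferring from $N$ to $V$, we conclude that $\pi(s_0)$ is a genuine winning strategy for $A$ on $(\mathcal{S}^{\pi(\sigma_0)};\leq_{\mathcal{S}^{\pi(\sigma_0)}})$.

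Finally I would read off the bound: $\pi(\sigma_0)=\operatorname{ot}(M\cap\sigma_0)$ has cardinality at most $|M|=\mu$, so $\pi(\sigma_0)<\mu^+$. When $\Gamma\subset\omega$ this yields $\pi(\sigma_0)<\omega_1$, and otherwise $\pi(\sigma_0)<|\bigcup\Gamma|^+$; in either case $\textnormal{ch}(\Gamma)\le\pi(\sigma_0)$ lies below the claimed bound. The step I expect to be the main obstacle is the absoluteness claim of the third paragraph: one must check both that the $\Sigma_1$-definitions of \hyperref[sumprodef]{Lemma \ref{sumprodef}} and \hyperref[lemmadef]{Lemma \ref{lemmadef}} are computed correctly in the collapse (so that the poset does not change under $\pi$), and that finiteness of all plays---guaranteed precisely by the $\mathsf{wpog}$ hypothesis---is what upgrades ``being a winning strategy'' from a merely one-directionally absolute statement to a fully absolute one.
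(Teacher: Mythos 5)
Your proposal is correct, but it takes a genuinely different route from the paper. The paper never collapses submodels of the universe: it encodes ``$B$ has a winning strategy'' as an infinitary conjunction $\Psi=\bigwedge_{n}\psi_{2n}$ of \emph{first-order} sentences about the poset (this is where finiteness of plays, i.e.\ the $\mathsf{wpog}$ hypothesis, enters for the paper), uses Lemma \ref{lemmadef} to see that the class $C=\{(\mathcal{S}^{\sigma};\leq_{\mathcal{S}^\sigma})\}$ is $\Sigma_1$-definable, and then invokes Bagaria's theorem on $\Sigma_1$-definable classes to obtain, for every structure in $C$, an elementarily embedded member of $C$ of index below $\kappa$; it then transfers $\Psi$ \emph{upward} along these embeddings, so that a win for $B$ at a suitable $\xi<\kappa$ propagates to all posets in the class, and concludes by contraposition. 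You instead compress a witness for $A$ \emph{downward}: take $(\sigma_0,s_0)$ with $s_0$ a winning strategy for $A$, reflect into $M\prec H_\theta$ of size $\mu$, collapse, and use absoluteness (your finiteness-of-plays argument plays the role that $\Psi$ plays in the paper) to land $A$'s win at $\pi(\sigma_0)<\mu^+$. Your version is more self-contained -- plain L\"owenheim--Skolem plus Mostowski collapse in ZFC, no citation of the $C^{(n)}$-cardinals machinery -- and it directly exhibits a small ordinal where $A$ wins rather than arguing by contradiction. The paper's version buys generality: since it only needs elementary embeddings between the structures themselves and the definability of the class, it scales (as the paper remarks) to $\Sigma_n$-definable classes at the price of large-cardinal hypotheses, where a naive $H_\theta$-collapse argument would have to be reworked. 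Two small points you should nail down if you write this up fully: the identification $\pi(\mathcal{S}^{\sigma_0})=\mathcal{S}^{\pi(\sigma_0)}$ needs downward as well as upward absoluteness of the $\Sigma_1$ membership condition for $\langle\Gamma\rangle^{\pi(\sigma_0)}$, which holds because the existential witnesses $\alpha_i$ can be bounded by the element $\alpha$ itself and hence lie in the transitive set $N$; and the claim that every finite play in $V$ lies in $N$ uses that $N$, being the collapse of $M\prec H_\theta$, models enough of $\mathrm{ZFC}$ without power set to be closed under finite tuples of its elements. Both are routine, and you correctly isolated absoluteness as the crux.
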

\begin{proof}
    For all $n\in\omega\backslash\{0\}$, we can define the sentence $\psi_{2n}\equiv$ ``player $B$ can avoid losing after $2n$ moves of Chomp''. For example, for $n=2$ we have \begin{gather*} \psi_4=\forall x_0(x_0=0\vee\exists x_1(\overline{x_0,x_1}\wedge x_1\neq0\wedge\\\wedge\forall x_2(\overline{x_0,x_1,x_2}\rightarrow x_2=0\vee\exists x_3(\overline{x_0,\dots,x_3}\wedge x_3\neq0)))) \end{gather*}
where $\overline{x_0,\dots,x_n}\equiv$ ``$(x_0,\dots,x_n)$ is a play of Chomp (or a fragment of it)'':
   $$ \overline{x_0,\dots,x_n}=\bigwedge_{i\leq n}\bigwedge_{j< i}x_{j}\not\leq x_{i}.$$ Then, since the Chomp game is finite, we have $$\Psi=\bigwedge_{n\in\omega\backslash\{0\}}\psi_{2n}\equiv\text{``$B$ has a winning strategy''}.$$

   By \hyperref[lemmadef]{Lemma \ref{lemmadef}}, we can apply \cite[Theorem 4.2]{Bagaria2012-BAGCN} to the class of structures $C=\{(\mathcal{S}^{\sigma};\leq_{\mathcal{S}^\sigma}) \mid \text{$\omega^\sigma$ is $\Gamma$-closed}\}$. Let $\kappa$ be equal to $\omega_1$ if $\Gamma \subset \omega$, and equal to $|\bigcup\Gamma|^+$ otherwise. Then, for every $(\mathcal{S}^\beta;\leq_{\mathcal{S}^\beta})\in C$, there exists $(\mathcal{S}^{\alpha};\leq_{\mathcal{S}^\alpha})\in C $ with $\alpha<\kappa$, such that there is some elementary embedding $j:\mathcal{S}^\alpha\to\mathcal{S}^\beta$. Take, for example, the least $\xi<\kappa$ for which there exists one of these elementary embeddings from $\mathcal{S}^\xi$ to $\mathcal{S}^\sigma$ with $\sigma\geq\kappa$. Now, if $B$ has a winning strategy on $(\mathcal{S}^\xi;\leq_{\mathcal{S}^\xi})$, since there is an elementary embedding that preserves the sentences $\psi_{2n}$, we have $(\mathcal{S}^\sigma;\leq_{\mathcal{S}^\sigma})\vDash\Psi$. Thus, by \hyperref[downtrans]{Lemma \ref{downtrans}}, $B$ has a winning strategy on every $(\mathcal{S}^\alpha;\leq_{\mathcal{S}^\alpha})$ with $\alpha<\kappa$. Now we can transfer $\Psi$ to all structures above $\sigma$ using the corresponding elementary embeddings. Then, if $B$ has a winning strategy on $(\mathcal{S}^\xi;\leq_{\mathcal{S}^\xi})$, $B$ also has a winning strategy on all posets in $ C$ and therefore, again by \hyperref[downtrans]{Lemma \ref{downtrans}}, on all posets in $\{(\mathcal{S}^{\sigma};\leq_{\mathcal{S}^\sigma}) \mid \Gamma\subset\omega^{\sigma}\}$, which proves that $\textnormal{ch}(\Gamma)<\kappa$.
\end{proof}
Although this theorem is quite general (and, in fact, its proof can be adapted for submonoids with the usual sum of ordinals instead of the natural sum), it is clear that it is far from optimal, and we believe that the bound provided can be significantly improved: very little information about the structure of the posets, which could be valuable, has been used so far. And, for families of posets with a more complex definition ($\Sigma_n$-definable for some $n \geq 2$), we would require large cardinals in the proof, which is even less optimal. Moreover, here we have focused only on posets generated by finite sets.
\begin{question}\label{ques}
Can we prove that $\textnormal{ch}(\Gamma)$ is the smallest possible ($\overline{e}(\Gamma)+1$ or $\overline{e}(\Gamma)$)?
\end{question}

If the answer to \hyperref[ques]{Question \ref{ques}} is affirmative, \cite[Theorem 6.5]{ALCO_2018__1_3_371_0} would show us that the problem of determining the result of playing Chomp on any $(\mathcal{S}^\sigma,\leq_{\mathcal{S}^\sigma})$ generated by natural numbers is decidable. However, the problem of finding explicit winning strategies in each case may still remain unresolved, leaving many open questions regarding the chomp game on these structures.

\section*{Acknowledgments}
The author wishes to acknowledge Michael Lieberman and Ignacio García-Marco for their comments during the development of this paper. In particular, Ignacio García-Marco introduced this topic to the author and proposed \hyperref[ques]{Question \ref{ques}}.

The research was supported by the Brno University of Technology from the project no. FSI-S-23-8161.

\vspace{5mm}


\begin{thebibliography}{GMK18}

\bibitem[Alt17]{altman}
Harry~J. Altman.
\newblock Intermediate arithmetic operations on ordinal numbers.
\newblock {\em Mathematical Logic Quarterly}, 63(3-4):228--242, 2017.

\bibitem[Bag12]{Bagaria2012-BAGCN}
Joan Bagaria.
\newblock C$^{(n)}$-cardinals.
\newblock {\em Archive for Mathematical Logic}, 51(3-4):213--240, 2012.

\bibitem[Bal09]{Baldwin2009}
John~T. Baldwin.
\newblock {\em Categoricity}, volume~50 of {\em Univ. Lect. Ser.}
\newblock Providence, RI: American Mathematical Society (AMS), 2009.

\bibitem[dP77]{DEJONGH1977195}
D.H.J {de Jongh} and Rohit Parikh.
\newblock Well-partial orderings and hierarchies.
\newblock {\em Indagationes Mathematicae (Proceedings)}, 80(3):195--207, 1977.

\bibitem[Dra74]{drake1974set}
F.R. Drake.
\newblock {\em Set Theory: An Introduction to Large Cardinals}.
\newblock Studies in logic and the foundations of mathematics. North-Holland Publishing Company, 1974.

\bibitem[FR15]{intro}
Stephen~A. Fenner and John Rogers.
\newblock Combinatorial game complexity: an introduction with poset games.
\newblock {\em Bull. Eur. Assoc. Theor. Comput. Sci. EATCS}, 116:42--75, 2015.

\bibitem[Gal74]{gale}
David Gale.
\newblock A curious nim-type game.
\newblock {\em Amer. Math. Monthly}, 81:876--879, 1974.

\bibitem[GMK18]{ALCO_2018__1_3_371_0}
Ignacio Garc{\'\i}a-Marco and Kolja Knauer.
\newblock Chomp on numerical semigroups.
\newblock {\em Algebraic Combinatorics}, 1(3):371--394, 2018.

\bibitem[Jec03]{Jech2003}
Thomas Jech.
\newblock {\em Set theory.}
\newblock Springer Monogr. Math. Berlin: Springer, the third millennium edition, revised and expanded edition, 2003.
\end{thebibliography}
\end{document}